\newcommand{\bea}{\begin{eqnarray}}
\newcommand{\eea}{\end{eqnarray}}
\newcommand{\bna}{\begin{eqnarray*}}
\newcommand{\ena}{\end{eqnarray*}}
\numberwithin{equation}{section} 
\theoremstyle{plain}
\newtheorem{theorem}{Theorem}[section]
\newtheorem{lemma}{Lemma}[section]
\newtheorem{corollary}{Corollary}
\theoremstyle{definition}
\begin{document}

\title{A note on simultaneous nonvanishing of Dirichlet $L$-functions and
twists of Hecke-Maass $L$-functions}

\author{Qingfeng Sun}
\address{School of Mathematics and Statistics \\ Shandong University, Weihai \\ Weihai \\Shandong 264209 \\China}
\email{qfsun@sdu.edu.cn}
\date{\today}

\begin{abstract}
 In this note, we prove that given a Hecke-Maass cusp form $f$ for
$SL_2(\mathbb{Z})$ and a sufficiently large integer $q=q_1q_2$
with $q_j\asymp \sqrt{q}$ being prime numbers for $j=1,2$, there exists a primitive
Dirichlet
character $\chi$ of conductor $q$ such that
$L\left(\frac{1}{2},f\otimes \chi\right)
L\left(\frac{1}{2},\chi\right)\neq 0$. To prove this, we establish asymptotic
formulas of $L\left(\frac{1}{2},f\otimes \chi\right)
L\left(\frac{1}{2},\chi\right)$ over the family of even primitive Dirichlet characters $\chi$
of conductor $q$ for more general $q$.
\end{abstract}

\keywords{$L$-functions, central point, simultaneous nonvanishing}
\maketitle

\section{Introduction}

The special values of $L$-functions often carry important information,
algebraic, analytic or geometric. Thus it is of the first importance
to see whether it is nonzero. Moreover, in many applications, one is more
concerned with that when two or more $L$-functions are simultaneous
nonvanishing (see \cite{BM}, \cite{IS}, \cite{KMV}, \cite{MV} for example).
Recently, Das and Khan \cite{DK} showed that given a Hecke-Maass cusp
form $f$ for $SL_2(\mathbb{Z})$ and a sufficiently large prime $q$,
there exists a primitive Dirichlet character $\chi$ of conductor $q$ such that
the product of $L$-values $L\left(\frac{1}{2},f\otimes \chi\right)$
and $L\left(\frac{1}{2},\chi\right)$ does not vanish. More precisely, they
proved the following asymptotic formula
\bea
\sideset{}{^\dag}\sum_{\chi \bmod q\atop \chi(-1)=1}
L\left(\frac{1}{2},f \otimes \chi\right)
\overline{L\left(\frac{1}{2},\chi\right)}=
\frac{q-2}{2}L(1,f)
+O_{f,\varepsilon}\left(q^{\frac{7}{8}+\theta+\varepsilon}\right),
\eea
where throughout the paper, the $\dag$ means that the summation is over primitive
characters and $\theta$ denotes the exponent
towards the Ramanujan-Petersson conjecture for $f$, which can be taken
as $\theta=\frac{7}{64}$ due to Kim and Sarnak \cite{K}. They also note that
their method may work for any large integer $q$.
So the aim of this note is to generalize their result to large integer $q=q_1q_1$,
where $q_1$ and $q_2$ are primes satisfying some conditions.
Our main result is the following theorem.

\begin{theorem}
Let $f$ be a Hecke-Maass cusp form for $SL_2(\mathbb{Z})$ and let $q=q_1q_2$,
$q_1$ and $q_2$ being primes.
For any $\varepsilon>0$, we have
\bna
\sideset{}{^\dag}\sum_{\chi \bmod q\atop \chi(-1)=1}
L\left(\frac{1}{2},f \otimes \chi\right)
\overline{L\left(\frac{1}{2},\chi\right)}&=&
\frac{\phi(q)}{2}\left(1-\frac{\lambda_f(q_1)}{q_1}+\frac{1}{q_1^2}\right)
\left(1-\frac{\lambda_f(q_2)}{q_2}+\frac{1}{q_2^2}\right)L(1,f)\\
&+&O\left(q^{\frac{7}{8}+\frac{3\theta}{8(1+\theta)}+\varepsilon}
+q^{\frac{3}{4}+\frac{3\theta}{2}+\varepsilon}
+\left(\frac{q_1}{q_2}+\frac{q_2}{q_1}\right)q^{\frac{3}{4}+\varepsilon}\right)\\
&+&O\left(\frac{q^{\frac{5}{4}+\frac{3\theta}{2}+\varepsilon}}{\min\{q_1,q_2\}}
+\max\{q_1,q_2\}+\max\{q_1,q_2\}^3q^{-\frac{9(1+2\theta)}{8(1+\theta)}+\varepsilon}\right),
\ena
where the implied constants depend on $f$ and $\varepsilon$.
\end{theorem}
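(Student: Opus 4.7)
The plan is to follow the approach of Das--Khan via approximate functional equations and $GL_2$ Voronoi summation, with additional care to handle the intermediate divisors of the composite modulus $q=q_1q_2$. First I would open both central values using their approximate functional equations, giving Dirichlet polynomials of lengths roughly $q^{3/2}$ and $q^{1/2}$:
\begin{align*}
L\bigl(\tfrac{1}{2}, f\otimes\chi\bigr) &\approx \sum_{m\ll q^{3/2+\varepsilon}} \frac{\lambda_f(m)\chi(m)}{\sqrt{m}}\, V_1\!\left(\frac{m}{q\sqrt{q}}\right), \\
L\bigl(\tfrac{1}{2}, \chi\bigr) &\approx \sum_{n\ll q^{1/2+\varepsilon}} \frac{\chi(n)}{\sqrt{n}}\, V_2\!\left(\frac{n}{\sqrt{q}}\right),
\end{align*}
together with their self-dual counterparts. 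Primitive characters modulo $q$ are then detected by M\"obius inversion, $\sideset{}{^\dag}\sum_{\chi \bmod q} = \sum_{d\mid q}\mu(q/d)\sum_{\chi \bmod d}$, and orthogonality of even characters modulo $d\in\{1,q_1,q_2,q\}$ reduces the character sum to the condition $m\equiv\pm n\pmod{d}$. The diagonal $m=n$ with $(n,q)=1$, arising from the $d=q$ term, contributes $\frac{\phi(q)}{2}\sum_{(n,q)=1}\lambda_f(n) V_1 V_2/n$, which evaluates asymptotically to the stated main term $\frac{\phi(q)}{2}L(1,f)\prod_{p\mid q}(1-\lambda_f(p)/p+1/p^2)$.

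The remaining off-diagonal contributions, with $m\equiv\pm n\pmod{d}$ and $m\neq\pm n$, split according to the divisor $d\mid q$. Writing $m=dk\pm n$ with $k\neq 0$ and applying the $GL_2$ Voronoi summation formula in the $m$-variable transforms the inner sum into a dual sum of length $\asymp mn/d^2$ weighted by Kloosterman sums of modulus $d$. For $d=q$ this essentially reproduces the Das--Khan analysis and, after Cauchy--Schwarz combined with the Weil bound or a spectral large sieve, produces the error terms $q^{7/8+3\theta/(8(1+\theta))+\varepsilon}$ and $q^{3/4+3\theta/2+\varepsilon}$. The terms from $d=1$ are essentially trivial after one more application of Voronoi.

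The principal new obstacle, absent from the prime-modulus case treated in \cite{DK}, comes from the intermediate divisors $d=q_1$ and $d=q_2$, each of size $\asymp\sqrt{q}$. Because after Voronoi the dual variable lives at scale $\asymp 1$ for these $d$, the resulting sum is highly sensitive to any imbalance between $q_1$ and $q_2$, producing the asymmetric contribution $(q_1/q_2+q_2/q_1)q^{3/4+\varepsilon}$. A further dyadic split -- treating the long dual range by Cauchy--Schwarz and exploiting cancellation in the Kloosterman sums modulo the smaller prime, and treating the short range trivially -- yields the complementary terms $q^{5/4+3\theta/2+\varepsilon}/\min\{q_1,q_2\}$ and $\max\{q_1,q_2\}^3 q^{-9(1+2\theta)/(8(1+\theta))+\varepsilon}$, while the additive $\max\{q_1,q_2\}$ reflects boundary contributions where the Voronoi transform is not effective. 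Balancing these bounds against each other is the delicate point of the proof; when $q_1\asymp q_2\asymp\sqrt{q}$ all the error terms are strictly smaller than the main term of size $\phi(q)\asymp q$, which yields the simultaneous nonvanishing statement announced in the abstract.
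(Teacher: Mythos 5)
Your overall frame (approximate functional equation, detection of primitive even characters divisor by divisor, diagonal giving the main term) matches the paper's Sections 3--4, but there is a genuine gap: you never treat the dual term of the approximate functional equation, the one carrying the Gauss sum $\tau(\chi)/\sqrt{q}$. After orthogonality that term does not reduce to congruences $m\equiv \pm n \pmod d$ at all; one must evaluate $\sideset{}{^\dag}\sum_{\chi(-1)=1}\chi(m)\overline{\chi}(n)\tau(\chi)$ via the twisted multiplicativity $\tau(\chi_1\chi_2)=\chi_1(q_2)\chi_2(q_1)\tau(\chi_1)\tau(\chi_2)$, which produces complete additive phases $e\bigl(\pm\overline{m}_1\overline{q_2}n/q_1\pm\overline{m}_2\overline{q_1}n/q_2\bigr)$ together with degenerate cross terms in which only one Gauss sum survives. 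Handling this is the technical core of the paper (Section 5): a three-way case analysis in the sizes of $m$ and $n$, using the Wilton bound for short $m$, Poisson plus the Weil bound for short $n$, and Poisson in $m$ followed by Voronoi in $n$ otherwise, with two parameters optimized to $\beta_1=\tfrac34+\tfrac{3\theta}{4(1+\theta)}$ and $\beta_2=\tfrac12+\tfrac{3\theta}{2(1+\theta)}$. That analysis is the sole source of the terms $q^{\frac78+\frac{3\theta}{8(1+\theta)}+\varepsilon}$, $q^{\frac54+\frac{3\theta}{2}+\varepsilon}/\min\{q_1,q_2\}$ and $\max\{q_1,q_2\}^3q^{-\frac{9(1+2\theta)}{8(1+\theta)}+\varepsilon}$ in the statement. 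Your sketch instead attributes the first of these to the $d=q$ congruence off-diagonal and the last two to the intermediate divisors $d=q_1,q_2$; neither attribution corresponds to an argument that would produce those exponents, which indicates the error terms were fitted to the statement rather than derived.

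Two smaller points. The congruence off-diagonal (the paper's $S_{11}^{\flat}$, $S_{13}$, $S_{14}$) is handled without Voronoi or Kloosterman sums: one detects $n\equiv\pm m\pmod r$ by Ramanujan-type sums over $r\mid q$, applies Poisson to the smooth $m$-variable, and then uses the uniform bound $\sum_{n\leq N}\lambda_f(n)e(\alpha n)\ll N^{1/2+\varepsilon}$ on the $n$-sum; this yields $q^{\frac34+\frac{3\theta}{2}+\varepsilon}$, $\bigl(\frac{q_1}{q_2}+\frac{q_2}{q_1}\bigr)q^{\frac34+\varepsilon}$ and $q_1+q_2$, and it is not clear that your proposed Voronoi-in-$m$ route with Kloosterman sums recovers these. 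Also, the Dirichlet polynomial for $L(\frac12,f\otimes\chi)$ has length about $q$ (its conductor is $q^2$), not $q^{3/2}$; only the product of the two lengths is $q^{3/2}$, which is why the paper works with a single joint weight $V\bigl(\pi^{3/2}mn/q^{3/2}\bigr)$.
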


\begin{corollary}
Let $f$ be a Hecke-Maass cusp form for $SL_2(\mathbb{Z})$. Let $q=q_1q_2$, $q_1$, $q_2$
being primes and $q^{\frac{1}{2}-\varepsilon}\ll q_1 \ll q^{\frac{1}{2}+\varepsilon}$ with
$\varepsilon>0$ an arbitrarily small constant. Then we have
\bna
\sideset{}{^\dag}\sum_{\chi \bmod q\atop \chi(-1)=1}
L\left(\frac{1}{2},f \otimes \chi\right)
\overline{L\left(\frac{1}{2},\chi\right)}&=&
\frac{\phi(q)}{2}\left(1-\frac{\lambda_f(q_1)}{q_1}+\frac{1}{q_1^2}\right)
\left(1-\frac{\lambda_f(q_2)}{q_2}+\frac{1}{q_2^2}\right)L(1,f)\\
&+&O\left(q^{\frac{7}{8}+\frac{3\theta}{8(1+\theta)}+\varepsilon}
+q^{\frac{3}{4}+\frac{3\theta}{2}+\varepsilon}\right),
\ena
where the implied constant
depends on $f$ and $\varepsilon$.
\end{corollary}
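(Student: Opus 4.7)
The plan is to derive the Corollary as an immediate specialization of Theorem 1.1 under the balanced-primes hypothesis $q^{1/2-\varepsilon}\ll q_1\ll q^{1/2+\varepsilon}$. Since the main term displayed in the Corollary is identical to the main term of the Theorem, the only task is to check that the five ``auxiliary'' contributions in the error term of the Theorem all collapse, under this hypothesis, into the two retained terms $q^{7/8+3\theta/(8(1+\theta))+\varepsilon}$ and $q^{3/4+3\theta/2+\varepsilon}$.

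First I would record that the hypothesis together with $q=q_1q_2$ forces $q^{1/2-\varepsilon}\ll q_2\ll q^{1/2+\varepsilon}$ as well, so both $q_1/q_2$ and $q_2/q_1$ are $O(q^{2\varepsilon})$, and both $\min\{q_1,q_2\}$ and $\max\{q_1,q_2\}$ lie in the range $[q^{1/2-\varepsilon},q^{1/2+\varepsilon}]$. Substituting into the individual error terms: $(q_1/q_2+q_2/q_1)q^{3/4+\varepsilon}$ becomes $O(q^{3/4+O(\varepsilon)})$ and is absorbed into $q^{3/4+3\theta/2+\varepsilon}$; the term $q^{5/4+3\theta/2+\varepsilon}/\min\{q_1,q_2\}$ becomes $O(q^{3/4+3\theta/2+O(\varepsilon)})$ and is absorbed into the same retained term; and $\max\{q_1,q_2\}$ is at most $q^{1/2+\varepsilon}$, trivially dominated by either retained term.

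The one place where a concrete inequality has to be checked is the final term $\max\{q_1,q_2\}^3 q^{-9(1+2\theta)/(8(1+\theta))+\varepsilon}$, which under the hypothesis becomes $O\bigl(q^{3/2-9(1+2\theta)/(8(1+\theta))+O(\varepsilon)}\bigr)$. I would verify the inequality
\[
\frac{3}{2}-\frac{9(1+2\theta)}{8(1+\theta)}\;\leq\;\frac{7}{8}+\frac{3\theta}{8(1+\theta)},
\]
which after multiplying by $8(1+\theta)$ and simplifying reduces to $-4\leq 16\theta$, and therefore holds for every admissible $\theta\geq 0$ (in particular for $\theta=7/64$).

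With these four comparisons in place, each auxiliary term is pointwise dominated by one of the two retained error terms, and the asymptotic formula of the Corollary follows verbatim from Theorem 1.1. The only ``obstacle'' is thus the bookkeeping of the $\theta$-dependent exponents in the last comparison; no new analytic input beyond the Theorem itself is required.
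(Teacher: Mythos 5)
Your proposal is correct and is exactly the intended deduction: the paper treats the Corollary as an immediate specialization of Theorem 1.1, and your substitution of $q_1,q_2\asymp q^{1/2\pm\varepsilon}$ into each error term, including the verification that $\frac{3}{2}-\frac{9(1+2\theta)}{8(1+\theta)}\le\frac{7}{8}+\frac{3\theta}{8(1+\theta)}$ reduces to $-4\le 16\theta$, is the complete argument.
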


Recently, Liu \cite{L} proved an asymptotic formula for $L\left(\frac{1}{2},f \otimes \chi\right)
\overline{L\left(\frac{1}{2},\chi\right)}$ over the family of even primitive
Dirichlet characters of conductors satisfying a special structure. More precisely,
the Dirichlet character $\chi$ modulo $q$ in Liu's case satisfies the conditions:
$$
q=q_1q_2, q\asymp Q, q_1\asymp Q^{3/4}, q_2\asymp Q^{1/4}.
$$
So the modulus of the Dirichlet character is not specified and our $q$ in Corollary
1.1 is not included in Liu's modulus set. Since $L(1,f) \neq 0$, we also have the following
non-vanishing result.

\begin{corollary}
Let $f$ be a fixed Hecke-Maass cusp form for $SL_2(\mathbb{Z})$. Then for every large
integer $q=q_1q_2$, $q_1$, $q_2$
being primes and $q^{\frac{1}{2}-\varepsilon}\ll q_1 \ll q^{\frac{1}{2}+\varepsilon}$ with
$\varepsilon>0$ an arbitrarily small constant, there exists a primitive Dirichlet character
$\chi$ of conductor $q$ such that the product of the central values
$L\left(\frac{1}{2},f\otimes \chi\right)$
and $L\left(\frac{1}{2},\chi\right)$ does not vanish.
\end{corollary}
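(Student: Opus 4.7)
The plan is to deduce Corollary 1.2 from the asymptotic formula already stated in Corollary 1.1, combined with the classical fact that $L(1,f)\neq 0$ for any cuspidal Hecke--Maass form $f$ on $SL_2(\mathbb{Z})$. The key task is to verify that the main term on the right-hand side of Corollary 1.1 genuinely dominates both error terms for all sufficiently large admissible $q$, so that the weighted moment cannot vanish identically over the family.

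First I would lower-bound the main term. Since $\phi(q)=(q_1-1)(q_2-1)\asymp q$, and since the Deligne--Ramanujan-type bound $|\lambda_f(q_j)|\leq 2q_j^{\theta}$ with $\theta=7/64$ gives
\[
\left(1-\frac{\lambda_f(q_1)}{q_1}+\frac{1}{q_1^2}\right)\left(1-\frac{\lambda_f(q_2)}{q_2}+\frac{1}{q_2^2}\right)=1+O\bigl(q_1^{\theta-1}+q_2^{\theta-1}\bigr)=1+o(1)
\]
as $q_1,q_2\to\infty$ in the balanced range $q^{1/2-\varepsilon}\ll q_1\ll q^{1/2+\varepsilon}$, the main term in Corollary 1.1 is of exact order $\tfrac{1}{2}qL(1,f)\,(1+o(1))$, a nonzero quantity since $L(1,f)\neq 0$.

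Second, I would check that both error exponents in Corollary 1.1 are strictly less than $1$. With Kim--Sarnak's $\theta=7/64$, one has
\[
\frac{7}{8}+\frac{3\theta}{8(1+\theta)}=\frac{7}{8}+\frac{21}{568}<1,\qquad \frac{3}{4}+\frac{3\theta}{2}=\frac{117}{128}<1,
\]
so the total error is $O(q^{1-\delta})$ for some fixed $\delta>0$ depending only on $\theta$. Hence, for every sufficiently large $q$ of the stated shape, the weighted sum in Corollary 1.1 is asymptotic to $\tfrac{1}{2}qL(1,f)$ and in particular is nonzero.

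Finally, I would invoke the standard contrapositive argument: if $L\bigl(\tfrac{1}{2},f\otimes\chi\bigr)\,L\bigl(\tfrac{1}{2},\chi\bigr)=0$ for every primitive even Dirichlet character $\chi$ of conductor $q$, then every term of the sum in Corollary 1.1 would vanish, contradicting the nonvanishing of the asymptotic just established. Consequently at least one primitive even character $\chi\bmod q$ satisfies $L\bigl(\tfrac{1}{2},f\otimes\chi\bigr)\neq 0$ and $L\bigl(\tfrac{1}{2},\chi\bigr)\neq 0$ simultaneously, which is exactly the claim. The only minor delicate point is the uniformity implicit in the $o(1)$ from the Hecke eigenvalue factors, but this is harmless in the stated range $q_1\asymp q_2\asymp q^{1/2}$ since then $q_j^{\theta-1}\to 0$; no further input beyond Corollary 1.1 and $L(1,f)\neq 0$ is needed.
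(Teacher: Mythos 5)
Your proposal is correct and follows exactly the paper's (one-line) argument: the main term $\tfrac{\phi(q)}{2}(1+o(1))L(1,f)$ is nonzero and of order $q$, the error exponents $\tfrac{7}{8}+\tfrac{21}{568}$ and $\tfrac{117}{128}$ are both strictly below $1$, so the sum in Corollary 1.1 is nonzero for large $q$ and at least one summand must be nonzero. The paper states this deduction without detail ("Since $L(1,f)\neq 0$, we also have the following non-vanishing result"), and your verification of the numerics and of the local factors $1-\lambda_f(q_j)/q_j+q_j^{-2}\to 1$ is exactly the content that is implicit there.
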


The term $q^{\frac{3}{4}+\frac{3\theta}{2}+\varepsilon}$ in Corollary 1.1 can be removed using an
unbalanced approximate functional equation in the proof. This can be seen more explicitly
in the case that $q$ is a prime. In fact, we can prove the following asymptotic formula.

\begin{theorem}
Let $f$ be a Hecke-Maass cusp form for $SL_2(\mathbb{Z})$ and let $q$ be a prime number.
For any $\varepsilon>0$, we have
\bna
\sideset{}{^\dag}\sum_{\chi \bmod q \atop \chi(-1)=1}
L\left(\frac{1}{2},f \otimes \chi\right)
\overline{L\left(\frac{1}{2},\chi\right)}=
\frac{q-2}{2}L(1,f)
+O\left(q^{\frac{7}{8}+\frac{3\theta}{8(1+\theta)}+\varepsilon}\right),
\ena
where the implied constant
depends on $f$ and $\varepsilon$.
\end{theorem}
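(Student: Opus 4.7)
The plan is to follow the Das--Khan strategy but to replace their balanced approximate functional equations by unbalanced ones, which both removes the secondary $q^{3/4+3\theta/2+\varepsilon}$ term present in Theorem 1.1 and refines the dependence on $\theta$ in the leading error.

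First I would apply an approximate functional equation to $L(1/2, f \otimes \chi)$ with a free length parameter $X$, producing a direct sum of effective length $\asymp X$ and a dual sum of length $\asymp q^2/X$; and an independent AFE to $L(1/2,\chi)$ with parameter $Y$ and dual length $q/Y$. Multiplying the two AFEs and summing the product over even primitive characters modulo the prime $q$, orthogonality gives
$$\sideset{}{^\dag}\sum_{\chi \bmod q \atop \chi(-1)=1} \chi(n)\bar\chi(m) = \tfrac{1}{2}\phi(q)\,\mathbf{1}_{n \equiv \pm m\,(q),\,(nm,q)=1} + O(1),$$
which splits each of the four resulting cross sums into a diagonal piece ($n = m$, with a minor semidiagonal $n + m = q$ coming from the $-$ sign) and an off-diagonal shifted piece. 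Analogous orthogonality, with root numbers $\epsilon(f,\chi)$ and $\tau(\chi)^2/q$ absorbed into Gauss sums, handles the cross terms involving functional equation twists. The diagonal $n = m$ contribution extracts the main term $\tfrac{q-2}{2}L(1,f)$ upon inverting the Mellin cutoff and recognising $\sum_{n \geq 1}\lambda_f(n)/n = L(1,f)$.

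The off-diagonal analysis is where the argument is most delicate. Writing the congruence $n \equiv \pm m \pmod q$ as $m = \pm n + qk$ with $k \neq 0$, I would apply Voronoi summation on the $\lambda_f$-variable to dualise $n$ into a new variable of effective length $\asymp q^2/X$ (or the appropriate dual in cross terms involving root numbers), picking up Kloosterman sums modulo $q$ to which the Weil bound applies. A Cauchy--Schwarz step in the remaining variable, combined with the Ramanujan--Petersson bound $|\lambda_f(n)| \leq n^\theta$ used to control the exceptional part of the second moment, produces an off-diagonal bound of the shape $X^{a_1} Y^{a_2} q^{a_3+\theta a_4}$ for each of the four cross sums separately. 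Balancing $X$ and $Y$ against each other and against the AFE truncation yields the exponent $\tfrac{7}{8} + \tfrac{3\theta}{8(1+\theta)}$.

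The main obstacle is the cross term in which both $L$-factors appear in their direct forms, since this is exactly the term that generates the $q^{3/4+3\theta/2+\varepsilon}$ error in Theorem 1.1. In the prime case, the freedom to shorten $Y$ makes the direct-direct double sum short enough to estimate without further input, provided the corresponding lengthening of $q/Y$ in the dual-direct cross term does not exceed the target bound. Verifying that a single unbalanced choice $(X,Y)$ can keep all four cross contributions simultaneously below $q^{7/8+3\theta/(8(1+\theta))+\varepsilon}$ is the technical crux of the argument, and is what must be handled with care in order to suppress the secondary error that survives in the composite case of Theorem 1.1.
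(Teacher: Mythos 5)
Your overall architecture is the right one, and it matches what the paper intends for Theorem 1.2 (which it proves by specializing the Theorem 1.1 machinery to prime $q$ with an unbalanced approximate functional equation): orthogonality over even primitive characters, a diagonal that yields $\frac{q-2}{2}L(1,f)$, and a dual/Gauss-sum term treated by Poisson summation, Weil's bound, and Voronoi summation. But there is a genuine gap at the quantitative core, and the one concrete mechanism you do specify for the error exponent is not the right one. In the paper, the exponent $\frac{7}{8}+\frac{3\theta}{8(1+\theta)}$ does \emph{not} arise from balancing approximate-functional-equation lengths $X$ and $Y$: it comes entirely from the analysis of the Gauss-sum term $S_{2}$, via a three-way optimization over two internal dyadic thresholds $\beta_1,\beta_2$ (splitting the ranges of the block lengths $M$ and $N$ of the $m$- and $n$-sums), with the competing bounds $q^{\frac12+\frac{\beta_1}{2}}$, $q^{1-\frac{\beta_1}{2}+\frac{\beta_2}{2}}$ and $q^{\frac32-\frac{\beta_1}{2}-\frac{\beta_2}{2}}(q^{2-\beta_2})^{\theta}$ coming respectively from a Wilton-type estimate, from Poisson plus Weil, and from Poisson plus Voronoi. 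The unbalancing of the functional equation is only responsible for removing the secondary term $q^{\frac34+\frac{3\theta}{2}}$ generated by the direct sum; it cannot by itself produce the exponent $\frac{7}{8}+\frac{3\theta}{8(1+\theta)}$, and you give no computation showing that a choice of $(X,Y)$ does.

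A second, more specific problem: in the short-$M$ regime (and in the off-diagonal of the direct term) the essential input is the uniform bound $\sum_{n\leq N}\lambda_f(n)e(\alpha n)\ll_{f,\varepsilon} N^{\frac12+\varepsilon}$ (Lemma 2.3 of the paper), which saves a full $\sqrt{N}$ over the trivial bound and is what brings the contribution down from $q^{\frac54}$ to $q^{\frac12+\frac{\beta_1}{2}}$. You replace this with ``Cauchy--Schwarz in the remaining variable combined with Ramanujan--Petersson to control the second moment,'' which recovers only the trivial bound $\sum_{n\le N}|\lambda_f(n)|n^{-1/2}\ll N^{1/2+\varepsilon}$ and therefore does not close the short-range cases. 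Finally, you explicitly defer ``the technical crux'' --- verifying that all four cross contributions (including the extra ones created by multiplying two separate functional equations, with weights $1$, $\tau(\chi)$, $\tau(\overline{\chi})$ and $\tau(\chi)^2$ whose character averages must each be evaluated) stay below the target --- so as written the argument does not establish the stated error term.
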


Theorem 1.2 makes a slight improvement of (1.1). Its proof is similar as that of Theorem 1.1
and much easier. So we omit the details here.

\medskip

\noindent {\bf Notation.} In this paper, $\varepsilon$ is an arbitrarily small positive constant
which is not necessarily the same at each occurrence. Also, the implied constants
in $\ll$ and $O$ depend on $f$ and $\varepsilon$ throughout the paper.

\section{Preliminaries}
\setcounter{equation}{0}
\bigskip

Let $\chi$ be an even primitive Dirichlet character modulo $q$. For Re$(s)>1$
we define the Dirichlet $L$-function
\bna
L(s,\chi)=\sum_{n\geq 1}\chi(n)n^{-s}
\ena
which has analytic continuation to all $s\in \mathbb{C}$ and satisfies
a functional equation relating $s$ and $1-s$.

Let $f$ be a Hecke-Maass cusp form for $SL_2(\mathbb{Z})$ with Laplace eigenvalue
$\frac{1}{4}+t^2$, $t\in \mathbb{R}$. Let $\lambda_f(n)$ be the normalized
$n$-th Fourier coefficient of $f$. For Re$(s)>1$
we define the Hecke $L$-function
\bna
L(s,f\otimes\chi)=\sum_{n\geq 1}\lambda_f(n)\chi(n)n^{-s}
\ena
which also has analytic continuation to the whole complex plane and satisfies
a functional equation relating $s$ and $1-s$. For $L(s,\chi)$ and $L(s,f\otimes\chi)$,
we have the following approximate functional equation (see \cite{IK}, Theorem 5.3).

\begin{lemma}
Let $G(u)=e^{u^2}$. For $\chi$ an even primitive Dirichlet character of modulus $q$, we have
\bna
L\left(\frac{1}{2},f\otimes\chi\right)
\overline{L\left(\frac{1}{2},\chi\right)}&=&
\sum_{m\geq 1}\sum_{n\geq 1}\frac{\lambda_f(n)\chi(n)\overline{\chi(m)}}{\sqrt{m n}}
V\left(\frac{\pi^{\frac{3}{2}}m n}{q^{\frac{3}{2}}}\right)\\
&&+
\frac{\tau(\chi)}{\sqrt{q}}\sum_{m\geq 1}\sum_{n\geq 1}
\frac{\lambda_f(n)\overline{\chi}(n)\chi(m)}{\sqrt{mn}}
V\left(\frac{\pi^{\frac{3}{2}}mn}{q^{\frac{3}{2}}}\right),
\ena
where
\bea
V(y)=\frac{1}{2\pi i}\int\limits_{(1)}y^{-u}
\frac{\Gamma\left(\frac{1+2u+2it}{4}\right)
\Gamma\left(\frac{1+2u-2it}{4}\right)
\Gamma\left(\frac{1+2u}{4}\right)}
{\Gamma\left(\frac{1+2it}{4}\right)
\Gamma\left(\frac{1-2it}{4}\right)
\Gamma\left(\frac{1}{4}\right)}G(u)\frac{\mathrm{d}u}{u}.
\eea
\end{lemma}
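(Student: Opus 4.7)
The plan is to derive the identity by the standard contour-shift procedure (as in \cite[Theorem 5.3]{IK}) applied to the product $L(s,f\otimes\chi)\,L(s,\overline{\chi})$, viewed as a degree-three $L$-function. The relevant completed $L$-functions are
\[
\Lambda(s,f\otimes\chi) := q^{s}\,\Gamma_{\mathbb{R}}(s+it)\Gamma_{\mathbb{R}}(s-it)\,L(s,f\otimes\chi), \qquad \Lambda(s,\overline{\chi}) := q^{s/2}\,\Gamma_{\mathbb{R}}(s)\,L(s,\overline{\chi}),
\]
where $\Gamma_{\mathbb{R}}(s) = \pi^{-s/2}\Gamma(s/2)$; they satisfy the functional equations $\Lambda(s,f\otimes\chi) = (\tau(\chi)^{2}/q)\,\Lambda(1-s,f\otimes\overline{\chi})$ (since $f$ has level one and $\chi$ is even) and $\Lambda(s,\overline{\chi}) = (\tau(\overline{\chi})/\sqrt{q})\,\Lambda(1-s,\chi)$. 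Multiplying these yields a functional equation for the product whose root number is $\tau(\chi)^{2}\tau(\overline{\chi})/q^{3/2} = \tau(\chi)/\sqrt{q}$, after using $\tau(\chi)\tau(\overline{\chi}) = \chi(-1)q = q$.

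Next I would consider the integral
\[
I := \frac{1}{2\pi i}\int_{(3)} q^{3u/2}\,\frac{\gamma(\tfrac{1}{2}+u)}{\gamma(\tfrac{1}{2})}\,L\!\left(\tfrac{1}{2}+u,\,f\otimes\chi\right) L\!\left(\tfrac{1}{2}+u,\,\overline{\chi}\right) G(u)\,\frac{du}{u},
\]
with $\gamma(s) := \Gamma_{\mathbb{R}}(s+it)\Gamma_{\mathbb{R}}(s-it)\Gamma_{\mathbb{R}}(s)$. On the line $\mathrm{Re}(u)=3$ both $L$-functions are given by absolutely convergent Dirichlet series, and interchanging sum and integral reduces $I$ to the first double sum in the statement, because the combined factor $\pi^{-3u/2}$ inside $\gamma(\tfrac{1}{2}+u)/\gamma(\tfrac{1}{2})$ combines with $q^{3u/2}(mn)^{-u}$ to produce precisely $(\pi^{3/2}mn/q^{3/2})^{-u}$, matching the argument of $V$.

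Then I would shift the contour to $\mathrm{Re}(u)=-3$. The only pole crossed is the simple pole of $1/u$ at $u=0$, whose residue contributes $L(\tfrac{1}{2},f\otimes\chi)L(\tfrac{1}{2},\overline{\chi}) = L(\tfrac{1}{2},f\otimes\chi)\overline{L(\tfrac{1}{2},\chi)}$; this produces the left-hand side of the desired identity. On the shifted contour, apply the product functional equation and substitute $u\mapsto -u$ (the contour reversal together with the transformation of $du/u$ cancels, and $G$ is even). Expanding the dual product $L(\tfrac{1}{2}+u,f\otimes\overline{\chi})\,L(\tfrac{1}{2}+u,\chi)$ as a Dirichlet series then yields the second double sum with the factor $\tau(\chi)/\sqrt{q}$ in front. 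The only care needed is the bookkeeping of the normalizing powers of $q$ and $\pi$ in the completions together with the Gauss-sum identity $\tau(\chi)\tau(\overline{\chi}) = q$; these steps are routine but easy to mis-sign, so they form the only genuine obstacle.
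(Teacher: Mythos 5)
Your derivation is correct and is precisely the standard contour-shift argument of \cite[Theorem 5.3]{IK} that the paper itself invokes without proof: the completed $L$-functions, the degree-three gamma factor, the root number $\tau(\chi)^{2}\tau(\overline{\chi})/q^{3/2}=\tau(\chi)/\sqrt{q}$ via $\tau(\chi)\tau(\overline{\chi})=\chi(-1)q=q$, and the matching of $q^{3u/2}\pi^{-3u/2}(mn)^{-u}$ with the argument of $V$ all check out. The only caveat is that the stated gamma factors presume $f$ is an even Maass form (as does the paper), and the sign from reversing the contour orientation under $u\mapsto -u$ must be tracked against the residue-theorem orientation, but both resolve to give the claimed $+$ signs.
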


The function $V(y)$ has the following properties.
\begin{lemma}
(i) \,For any $y>0$, we have
\bna
V(y)=1+O_{f,\varepsilon}\left(y^{\frac{1}{2}-\varepsilon}\right)
\ena
for any $\varepsilon>0$, and
\bna
y^jV^{(j)}(y)\ll_{f,A,j} (1+y)^{-A}
\ena
for any $A>0$ and $j\geq 0$.
\end{lemma}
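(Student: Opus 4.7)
The plan is to prove both bounds by shifting the $u$-contour in the defining integral (2.1) and carefully tracking the pole of $1/u$ at $u=0$. The essential analytic tools are Stirling's formula, which controls the $\Gamma$-quotient on vertical lines, together with the Gaussian decay $|G(u)|=e^{(\Re u)^{2}-(\Im u)^{2}}$, which guarantees absolute convergence after any horizontal shift.

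For the first claim $V(y)=1+O(y^{1/2-\varepsilon})$, I would shift the contour from $\Re u=1$ to $\Re u=-\frac{1}{2}+\varepsilon$. The new line is chosen to lie strictly between $0$ and the first poles of the three numerator $\Gamma$-factors, which sit on $\Re u=-\frac{1}{2}$. The only crossed singularity is the simple pole of $1/u$ at $u=0$, where the $\Gamma$-quotient and $G$ both evaluate to $1$, so the residue contributes exactly $1$. On the new line $|y^{-u}|=y^{1/2-\varepsilon}$, and the remaining integral is $O_{f,\varepsilon}(y^{1/2-\varepsilon})$ after invoking Stirling and the Gaussian weight.

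For the second claim, I would differentiate (2.1) under the integral; each derivative turns $y^{-u}$ into $(-u)(-u-1)\cdots(-u-j+1)\,y^{-u-j}$, so that
\[
y^{j}V^{(j)}(y)=\frac{1}{2\pi i}\int_{(1)}y^{-u}P_{j}(u)\,\frac{\Gamma\!\left(\frac{1+2u+2it}{4}\right)\Gamma\!\left(\frac{1+2u-2it}{4}\right)\Gamma\!\left(\frac{1+2u}{4}\right)}{\Gamma\!\left(\frac{1+2it}{4}\right)\Gamma\!\left(\frac{1-2it}{4}\right)\Gamma\!\left(\frac{1}{4}\right)}G(u)\,\frac{du}{u},
\]
where $P_{j}(u)=\prod_{k=0}^{j-1}(-u-k)$. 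For $y\geq 1$ I would shift rightwards to $\Re u=A$; no poles are crossed, and $|y^{-u}|=y^{-A}$ combined with the Gaussian weight gives $y^{j}V^{(j)}(y)\ll_{A,j}y^{-A}\leq(1+y)^{-A}$. For $0<y<1$ the bound $(1+y)^{-A}\asymp 1$ is required; this is immediate from the first claim when $j=0$, and for $j\geq 1$ I would shift leftwards to $\Re u=-\frac{1}{2}+\varepsilon$, noting that since $P_{j}(0)=0$ the integrand $P_{j}(u)/u$ is holomorphic at $u=0$, so no residue is picked up and the shifted integral is $O(y^{1/2-\varepsilon})=O(1)$.

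The contour shifts themselves are mechanical; the main point to verify is the uniform convergence of the shifted integrals. Specifically, I must check via Stirling that the $\Gamma$-quotient, normalized by the $t$-dependent denominator, grows at most polynomially in $|\Im u|$ on each vertical line, and that this polynomial growth together with the factor $P_{j}(u)$ is absorbed by $e^{-(\Im u)^{2}}$. The only delicate point is that on $\Re u=-\frac{1}{2}+\varepsilon$ the $\Gamma$-factors sit close to their poles, so the implied constant will depend on a negative power of $\varepsilon$; this dependence is harmless and costs only the stated $\varepsilon$-loss in the exponent $y^{1/2-\varepsilon}$.
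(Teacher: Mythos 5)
Your argument is correct and is the standard contour-shift proof of this lemma (the paper states Lemma 2.2 without proof; cf.\ Proposition 5.4 of Iwaniec--Kowalski), including the key observations that the only crossed pole on the way to $\Re(u)=-\tfrac12+\varepsilon$ is the simple pole of $1/u$ at $u=0$ with residue $1$, and that $P_j(0)=0$ removes that pole when $j\ge 1$. The only nit is the direction of your final inequality for $y\ge 1$: one has $y^{-A}\le 2^{A}(1+y)^{-A}$ rather than $y^{-A}\le (1+y)^{-A}$, which is harmless since the implied constant may depend on $A$.
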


We need the following uniform estimate of Fourier coefficients in exponential sums
(see \cite{I}, Theorem 8.1).

\begin{lemma}
For any $\alpha \in \mathbb{R}$, we have
\bna
\sum_{n\leq N}\lambda_f(n)e(\alpha n)\ll_{f,\varepsilon}N^{\frac{1}{2}+\varepsilon},
\ena
uniformly in $\alpha$.
\end{lemma}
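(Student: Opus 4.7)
The plan is to prove this uniform bound by the classical Wilton-type method: rational approximation combined with Voronoi summation. First I would attach a smooth dyadic partition of unity and reduce the problem to estimating
\[
S(\alpha,N):=\sum_{n\geq 1}\lambda_f(n)\,e(\alpha n)\,W(n/N)
\]
for a fixed bump function $W$ supported in $[1,2]$. By Dirichlet's approximation theorem, write $\alpha=a/q+\beta$ with $(a,q)=1$, $1\leq q\leq \sqrt{N}$, and $|\beta|\leq 1/(q\sqrt{N})$, so that the oscillatory factor $e(\beta n)$ is almost constant on intervals of length $\asymp q\sqrt{N}$, while the rational part $e(an/q)$ makes the sum amenable to Voronoi summation modulo $q$.

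Next I would apply the Voronoi summation formula for the Hecke--Maass cusp form $f$ (the Meurman version) to the inner sum with the smooth weight $h(x):=e(\beta x)W(x/N)$, obtaining a dual expression of the shape
\[
S(\alpha,N)=\frac{1}{q}\sum_{\pm}\sum_{m\geq 1}\lambda_f(m)\,
e\!\left(\mp\frac{\bar a m}{q}\right)\widetilde{h}^{\pm}\!\left(\frac{m}{q^2}\right),
\]
where $\widetilde{h}^{\pm}$ is a Bessel-type integral transform of $h$. Standard stationary-phase or asymptotic analysis of the Bessel kernels, together with the support of $h$ and the bound $|\beta|N\leq \sqrt{N}/q$, would show that $\widetilde{h}^{\pm}(m/q^2)$ is negligible unless $m\ll q^2(1+|\beta|N)^2/N \ll q^{1+\varepsilon}$, and in that range has size $\ll \sqrt{N}/q$, up to logarithms. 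Combining these two facts with Cauchy--Schwarz and the Rankin--Selberg mean-square bound $\sum_{m\leq M}|\lambda_f(m)|^2\ll_f M$, the dual sum contributes $\ll N^{1/2+\varepsilon}$ uniformly in $\alpha$.

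The main obstacle is precisely the sharp estimation of the Bessel transform $\widetilde{h}^{\pm}$: one needs uniform control in the four parameters $q,N,\beta,m$, and the transitional regime where $m/q^2$ meets the threshold from the support of $W$ requires genuine stationary-phase analysis rather than trivial integration by parts. A secondary, more technical issue is the case $q\asymp\sqrt{N}$, where the dual and original sums are of comparable length, so one cannot afford losses in the Rankin--Selberg step and must exploit the cancellation provided by the additive character $e(\mp\bar a m/q)$ via Cauchy--Schwarz, rather than bound $|\lambda_f(m)|$ individually (which would cost the Ramanujan defect $\theta$ and slightly worsen the exponent). Once these two pieces are in place, summing over dyadic $N$ yields the stated uniform estimate.
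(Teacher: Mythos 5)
The paper offers no proof of this lemma: it is quoted verbatim from Iwaniec's \emph{Spectral methods of automorphic forms}, Theorem 8.1, so the relevant comparison is with that standard proof. That argument is far more elementary than what you propose: writing the Fourier expansion of $f$ in terms of Whittaker functions, one has $\rho_f(n)W_{it}(4\pi ny)=\int_0^1 f(x+iy)e(-nx)\,\mathrm{d}x$, hence
\[
\sum_{N<n\le 2N}\rho_f(n)W_{it}(4\pi ny)\,e(n\alpha)=\int_0^1 f(x+iy)\sum_{N<n\le 2N}e\bigl(n(\alpha-x)\bigr)\,\mathrm{d}x ;
\]
since a cusp form is bounded on $\mathbb{H}$ and the geometric series is $\ll\min(N,\|\alpha-x\|^{-1})$, whose integral over $x\in[0,1]$ is $O(\log N)$, choosing $y\asymp 1/N$ (so that $W_{it}(4\pi ny)\asymp 1$ on the dyadic block) and unravelling the normalization $\lambda_f(n)\asymp \rho_f(n)\sqrt{n}$ gives the bound, uniformly in $\alpha$, with no rational approximation and no Voronoi summation. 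The uniformity in $\alpha$, which is the whole point of the lemma, is completely painless in this argument.

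Your Voronoi route is a genuinely different approach and can in principle be pushed through (it is essentially Meurman's method for sharper, $t$-uniform versions of this bound), but as written it stops exactly at the hard step: the uniform stationary-phase analysis of the Hankel/Bessel transform of $e(\beta x)W(x/N)$ in all four parameters $q,N,\beta,m$, including the transitional range and the non-oscillatory $K$-Bessel component present for Maass forms. You acknowledge this as "the main obstacle," so the crux is asserted rather than proved, and that is a real gap in the argument as it stands. A smaller inconsistency: with $q\le\sqrt{N}$ and $|\beta|\le 1/(q\sqrt{N})$ one has $q^2(1+|\beta|N)^2/N\ll 1+q^2/N\ll 1$, so the effective dual length is $O(1)$, not $q^{1+\varepsilon}$; your stated range is on the safe (larger) side, but it signals that the bookkeeping has not actually been carried out. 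Given that the target is only $N^{1/2+\varepsilon}$, you should either complete the transform estimates or simply invoke (or reproduce) the short boundedness argument above, which is what the paper does.
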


The following Voronoi formula can be found in \cite{MS} (see also \cite{G}, Theorem 3.2).

\begin{lemma}
Let $\psi$ be a fixed smooth function with compact support on $\mathbb{R}^+$.
Let $d$, $\overline{d}\in \mathbb{Z}$ with $(c,d)=1$ and $d\overline{d}\equiv 1 (\bmod c)$.
Then
\bna
\sum_{n\geq 1}\lambda_f(n)e\left(\frac{n\overline{d}}{c}\right)
\psi\left(\frac{n}{N}\right)=c\sum_{\pm}\sum_{n\geq 1}\frac{\lambda_f(n)}{n}
e\left(\pm\frac{nd}{c}\right)\Psi^{\pm}\left(\frac{nN}{c^2}\right),
\ena
where for $\sigma>-1$,
\bna
\Psi^{\pm}(x)=\frac{1}{2\pi i}\int\limits_{(\sigma)}(\pi^2x)^{-s}
G^{\pm}(s)\widetilde{\psi}(-s)\mathrm{d}s.
\ena
Here $\widetilde{\psi}(s)=\int_0^{\infty}\psi(x)x^{s-1}\mathrm{d}x$ is the Mellin transform of $\psi(x)$ and
\bna
2\pi G^{\pm}(s)=\frac{\Gamma\left(\frac{1+s+it}{2}\right)\Gamma\left(\frac{1+s-it}{2}\right)}
{\Gamma\left(\frac{-s+it}{2}\right)\Gamma\left(\frac{-s-it}{2}\right)}\pm
\frac{\Gamma\left(\frac{1+s+it+1}{2}\right)\Gamma\left(\frac{1+s-it+1}{2}\right)}
{\Gamma\left(\frac{-s+it+1}{2}\right)\Gamma\left(\frac{-s-it+1}{2}\right)}.
\ena
\end{lemma}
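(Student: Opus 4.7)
The plan is to derive the identity by the standard Mellin--Barnes contour argument built on the functional equation of the additively twisted Hecke $L$-series attached to $f$, in the spirit of Miller--Schmid.

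\textbf{Setup via Mellin inversion.} For $\mathrm{Re}(s)>1$, introduce the additively twisted $L$-function
\[
L_{a/c}(s,f) := \sum_{n\geq 1}\lambda_f(n)\,e\!\left(\tfrac{na}{c}\right)n^{-s},
\]
which extends to an entire function of $s$ because $f$ is cuspidal. Smoothness and compact support of $\psi$ force $\widetilde{\psi}$ to be entire with super-polynomial decay on vertical lines. Inserting the Mellin inversion formula $\psi(y)=\frac{1}{2\pi i}\int_{(\sigma)}\widetilde{\psi}(s)y^{-s}\mathrm{d}s$ with $\sigma>1$ and exchanging summation with integration rewrites the left-hand side of the lemma as
\[
\frac{1}{2\pi i}\int_{(\sigma)} N^{s}\,\widetilde{\psi}(s)\,L_{\overline{d}/c}(s,f)\,\mathrm{d}s .
\]

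\textbf{Functional equation of the additive twist.} I would next expand $e(n\overline{d}/c)$ over primitive Dirichlet characters modulo divisors of $c$ via Gauss sums, split the expansion according to $\chi(-1)=\pm1$, apply the completed functional equation for each $L(s,f\otimes\chi)$, and resum using the inverse Gauss-sum identity. This yields an identity of the shape
\[
L_{\overline{d}/c}(s,f) \;=\; c^{\,1-2s}\!\sum_{\pm}\mathcal{G}^{\pm}(s)\,L_{\mp d/c}(1-s,f),
\]
where $\mathcal{G}^{\pm}(s)$ is the ratio of archimedean factors coming from even, resp. odd, characters; the condition $d\overline{d}\equiv 1\pmod c$ is precisely what converts the additive argument on the right into $\mp d/c$.

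\textbf{Contour shift and recognition of $\Psi^{\pm}$.} I would then shift the integration contour from $\mathrm{Re}(s)=\sigma$ to $\mathrm{Re}(s)=-\sigma$. Since $L_{\overline{d}/c}(s,f)$ is entire and $\widetilde{\psi}$ decays rapidly, no residues arise and the horizontal segments vanish. Substituting the functional equation, changing variable $s\mapsto -s$, and re-expanding the now absolutely convergent $L_{\mp d/c}(1+s,f)$, I would interchange summation with integration once more. The factor $c^{1+2s}$ combines with $N^{-s}$ to produce $c\cdot(nN/c^{2})^{-s}$ inside the integral for each dual term; collecting powers of $\pi$ from the archimedean ratio into $(\pi^{2}x)^{-s}$ identifies the remaining Mellin--Barnes integral as precisely $\Psi^{\pm}(nN/c^{2})$, yielding the right-hand side.

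\textbf{Main obstacle.} The only non-mechanical step is the second: one must keep careful track of the Gauss sums, the powers of $c$ and $\pi$, and the two families of archimedean factors in order to verify that $\mathcal{G}^{\pm}(s)$, after one application of the Legendre duplication formula, matches the explicit kernel $2\pi G^{\pm}(s)$ written in the statement---in particular, that the shift by $+1$ in the Gamma arguments of the second summand of $G^{\pm}$ is exactly the contribution of the odd characters. Once this archimedean bookkeeping is completed, the contour shift and the identification of $\Psi^{\pm}$ are routine.
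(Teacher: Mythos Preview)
The paper does not prove this lemma at all; it quotes it from the literature, citing Miller--Schmid \cite{MS} and Godber \cite{G}. So there is no in-paper argument to compare against.

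Your outline is a correct route to the formula. One remark on step~2: the expansion of $e(n\overline{d}/c)$ into multiplicative characters is delicate for the terms with $(n,c)>1$ and for imprimitive characters modulo divisors of $c$; the bookkeeping can be done, but the more common derivation bypasses it entirely. One simply uses the modular invariance of $f$ under an element $\gamma=\left(\begin{smallmatrix} -d & * \\ c & -\overline{d}\end{smallmatrix}\right)\in SL_2(\mathbb{Z})$, which sends the cusp $\overline{d}/c$ to $\infty$ and directly produces the functional equation relating $L_{\overline{d}/c}(s,f)$ to $L_{\mp d/c}(1-s,f)$ with the archimedean factor $G^{\pm}(s)$ already in the stated form; the split into $\pm$ arises from decomposing $f$ into its even and odd parts under $z\mapsto -\bar z$, rather than from even/odd Dirichlet characters. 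Miller--Schmid obtain the same identity via automorphic distributions. Any of these approaches, once the functional equation is in hand, finishes exactly as in your third and fourth steps.
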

Notice that by shifting the contour of integration to Re$(s)=A$ to any $A>0$, we have
$\Psi^{\pm}(x)\ll_{f,A} x^{-A}$. For small $x$, we move the contour of integration to
Re$(s)=-1+\varepsilon$ to get
\bea
\Psi^{\pm}(x)\ll_{f,\varepsilon} x^{1-\varepsilon}
\eea
for any $\varepsilon>0$.
\medskip

\section{Proof of Theorem 1.1}
\setcounter{equation}{0}
\medskip

Applying the approximate functional equation in Lemma 2.1, we have
\bea
\sideset{}{^\dag}\sum_{\chi \bmod q \atop \chi(-1)=1}
L\left(\frac{1}{2},f \otimes \chi\right)
\overline{L\left(\frac{1}{2},\chi\right)}=S_1+S_2,
\eea
where
\bna
S_1&=&\sum_{m\geq 1}\frac{1}{\sqrt{m}}\sum_{n\geq 1}
\frac{\lambda_f(n)}{\sqrt{n}}
V\left(\frac{\pi^{\frac{3}{2}}m n}{q^{\frac{3}{2}}}\right)
\sideset{}{^\dag}\sum_{\chi \bmod q \atop \chi(-1)=1}
\overline{\chi}(m)\chi(n),\\
S_2&=&\frac{1}{\sqrt{q}}\sum_{m\geq 1}\frac{1}{\sqrt{m}}
\sum_{n\geq 1}\frac{\lambda_f(n)}{\sqrt{n}}
V\left(\frac{\pi^{\frac{3}{2}}m n}{q^{\frac{3}{2}}}\right)
\sideset{}{^\dag}\sum_{\chi \bmod q \atop \chi(-1)=1}
\chi(m)\overline{\chi}(n)\tau(\chi).
\ena
We will show in the next two sections that
\bea
S_1&=&\frac{\phi(q)}{2}\left(1-\frac{\lambda_f(q_1)}{q_1}+\frac{1}{q_1^2}\right)
\left(1-\frac{\lambda_f(q_2)}{q_2}+\frac{1}{q_2^2}\right)L(1,f)\nonumber\\
&&+O_{f,\varepsilon}\left(q_1+q_2+\left(\frac{q_1}{q_2}+\frac{q_2}{q_1}\right)
q^{\frac{3}{4}+\varepsilon}+
q^{\frac{3}{4}+\frac{3\theta}{2}+\varepsilon}\right)
\eea
and
\bea
S_2\ll_{f,\varepsilon}q^{\frac{7}{8}+\frac{3\theta}{8(1+\theta)}+\varepsilon}
+\left(\frac{q_1}{q_2}+\frac{q_2}{q_1}\right)q^{\frac{1}{4}+\frac{3\theta}{2}+\varepsilon}
+\frac{q^{\frac{5}{4}+\frac{3\theta}{2}+\varepsilon}}{\min\{q_1,q_2\}}
+\max\{q_1,q_2\}^3q^{-\frac{9(1+2\theta)}{8(1+\theta)}+\varepsilon}.
\eea
Then Theorem 1.1 follows from (3.1)-(3.3).

\medskip

\section{Estimation of $S_1$}
\setcounter{equation}{0}
\medskip

In this section we prove (3.2). We write
\bea
S_1&=&\sum_{m\geq 1}\frac{1}{\sqrt{m}}\sum_{n\geq 1}
\frac{\lambda_f(n)}{\sqrt{n}}
V\left(\frac{\pi^{\frac{3}{2}}m n}{q^{\frac{3}{2}}}\right)
B_q(m,n)
\eea
where
$
B_q(m,n)=\sideset{}{^\dag}\sum\limits_{\chi \bmod q \atop \chi(-1)=1}
\overline{\chi}(m)\chi(n).
$
By the orthogonality of Dirichlet characters, we have for $(mn,q)=1$,
\bea
B_q(m,n)&=&\frac{1}{2}\quad\sideset{}{^\dag}\sum_{\chi \bmod q}
(1+\chi(-1))\overline{\chi}(m)\chi(n)\nonumber\\
&=&\frac{1}{2}\sum_{\pm}\quad\sideset{}{^\dag}\sum_{\chi \bmod q}
\overline{\chi}(m)\chi(\pm n)\nonumber\\
&=&\frac{1}{2}\sum_{\pm}\quad\sideset{}{^\dag}\sum_{\chi_1 \bmod q_1}
\overline{\chi_1}(m)\chi_1(\pm n)
\quad\sideset{}{^\dag}\sum_{\chi_2 \bmod q_2}
\overline{\chi_2}(m)\chi_2(\pm n)\nonumber\\
&=&\frac{1}{2}\sum_{\pm}\left[\phi(q_1)1_{n\equiv \pm m\bmod q_1}-1\right]
\left[\phi(q_2)1_{n\equiv \pm m\bmod q_2}-1\right].
\eea
Plugging (4.2) into (4.1) we have
\bea
S_1=S_{11}+S_{12}-S_{13}-S_{14},
\eea
where
\bna
S_{11}&=&\frac{\phi(q)}{2}\sum_{\pm}\sum_{m\geq 1 \atop (m,q)=1}
\frac{1}{\sqrt{m}}\sum_{n\geq 1 \atop n\equiv \pm m \bmod q}
\frac{\lambda_f(n)}{\sqrt{n}}
V\left(\frac{\pi^{\frac{3}{2}}m n}{q^{\frac{3}{2}}}\right),\\
S_{12}&=&\sum_{m\geq 1 \atop (m,q)=1}
\frac{1}{\sqrt{m}}\sum_{n\geq 1 \atop (n,q)=1}
\frac{\lambda_f(n)}{\sqrt{n}}
V\left(\frac{\pi^{\frac{3}{2}}m n}{q^{\frac{3}{2}}}\right),\\
S_{13}&=&\frac{\phi(q_1)}{2}\sum_{\pm}\sum_{m\geq 1 \atop (m,q)=1}
\frac{1}{\sqrt{m}}\sum_{n\geq 1, (n,q_2)=1 \atop n\equiv \pm m \bmod q_1}
\frac{\lambda_f(n)}{\sqrt{n}}
V\left(\frac{\pi^{\frac{3}{2}}m n}{q^{\frac{3}{2}}}\right),\\
S_{14}&=&\frac{\phi(q_2)}{2}\sum_{\pm}\sum_{m\geq 1 \atop (m,q)=1}
\frac{1}{\sqrt{m}}\sum_{n\geq 1, (n,q_1)=1 \atop n\equiv \pm m \bmod q_2}
\frac{\lambda_f(n)}{\sqrt{n}}
V\left(\frac{\pi^{\frac{3}{2}}m n}{q^{\frac{3}{2}}}\right).
\ena
Trivially, we have
\bea
S_{12}\ll_{f,\varepsilon} \sum_{m\ll q^{\frac{3}{2}+\varepsilon}}\frac{1}{\sqrt{m}}
\sum_{n\ll \frac{q^{\frac{3}{2}+\varepsilon}}{m}}
\frac{|\lambda_f(n)|}{\sqrt{n}}+1
\ll q^{\frac{3}{4}+\varepsilon}.
\eea

Next, we show that $S_{11}$ contributes the main term.
\begin{lemma}
For any $\varepsilon>0$ we have
\bna
S_{11}&=&\frac{\phi(q)}{2}\left(1-\frac{\lambda_f(q_1)}{q_1}
+\frac{1}{q_1^2}\right)
\left(1-\frac{\lambda_f(q_2)}{q_2}+\frac{1}{q_2^2}\right)L(1,f)\nonumber\\
&&+O_{f,\varepsilon}\left(q^{\frac{3}{4}+\frac{3\theta}{2}+\varepsilon}
+\left(\frac{q_1}{q_2}+\frac{q_2}{q_1}\right) q^{\frac{3}{4}+\varepsilon}\right).
\ena
\end{lemma}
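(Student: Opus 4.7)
The plan is to split $S_{11}$ into the diagonal contribution (from $n=m$ in the $+$ sign case) and an off-diagonal remainder, and treat them by different means.

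For the diagonal
\[
D=\frac{\phi(q)}{2}\sum_{(m,q)=1}\frac{\lambda_f(m)}{m}\,V\!\left(\frac{\pi^{3/2}m^2}{q^{3/2}}\right),
\]
I would insert the Mellin--Barnes representation (2.1) of $V$, interchange sum and integral, and use Hecke multiplicativity to factor the inner Dirichlet series as
\[
\sum_{(m,q)=1}\frac{\lambda_f(m)}{m^{1+2u}}=L(1+2u,f)\prod_{j=1,2}\left(1-\frac{\lambda_f(q_j)}{q_j^{1+2u}}+\frac{1}{q_j^{2(1+2u)}}\right).
\]
Since $L(s,f)$ is entire, the only pole of the resulting integrand in $-1/2<\operatorname{Re}(u)<1$ is the simple pole at $u=0$ from the $1/u$ factor, whose residue is precisely the asserted main term. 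I would then shift the contour to $\operatorname{Re}(u)=-1/2+\theta/2+\varepsilon$; on this line each Euler correction factor is $O(q^\varepsilon)$ thanks to $|\lambda_f(q_j)|\ll q_j^\theta$, the convexity bound handles $L(1+2u,f)$, and the Gaussian factor $G(u)=e^{u^2}$ in (2.1) gives absolute convergence of the vertical integral. The resulting shifted integral is of size $O(q^{1/4+3\theta/4+\varepsilon})$, which is comfortably absorbed by the stated error.

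For the off-diagonal, I would parametrize $n=m+\ell q$ with $\ell\geq 1$ in the $+$ case and $n=\ell q-m$ with $\ell\geq 1$ in the $-$ case; the rapid decay of $V$ confines the sum to $m(m+\ell q)\ll q^{3/2+\varepsilon}$, so that $m\ll q^{3/4+\varepsilon}$ and $m\ell\ll q^{1/2+\varepsilon}$. A direct Cauchy--Schwarz estimate using the Rankin--Selberg bound $\sum_{n\leq X}|\lambda_f(n)|^2\ll X$ already yields a symmetric $O(q^{3/4+\varepsilon})$ bound, and to reach the asymmetric form in the lemma I would instead detect $n\equiv\pm m\pmod q$ via additive characters and split according to $(a,q)\in\{1,q_1,q_2,q\}$. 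For $(a,q)=q_j$ the inner $n$-sum becomes an oscillatory sum with denominator $q/q_j$ to which the Voronoi formula (Lemma 2.4) applies, producing a dual of length $\sim(q/q_j)^2/N$; the asymmetry between the cases $(a,q)=q_1$ and $(a,q)=q_2$ (dual lengths $\sim q_2^2/N$ and $\sim q_1^2/N$) is precisely what generates the factor $q_1/q_2+q_2/q_1$ in the error. The additional $q^{3/4+3\theta/2+\varepsilon}$ summand appears when one bounds the Voronoi-dual sum pointwise via $|\lambda_f(n)|\ll n^\theta$, where square-root cancellation is unavailable.

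The main obstacle will be the bookkeeping for the off-diagonal: one must keep the $q_1$- and $q_2$-dependences separately visible through the Voronoi transformation in order to land on the asymmetric bound rather than a symmetric but weaker one, and must control the dual weights $\Psi^\pm$ coming from Lemma 2.4 uniformly in the parameters via (2.2). By comparison, the diagonal step is a routine contour shift once the two local Euler factors have been identified.
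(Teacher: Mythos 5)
Your treatment of the diagonal term $S_{11}^0$ is essentially the paper's: Mellin--Barnes representation of $V$, factoring out the two Euler factors at $q_1,q_2$, and a contour shift past the pole at $u=0$; that part is fine. The problem is the off-diagonal. Your central claim --- that ``a direct Cauchy--Schwarz estimate using the Rankin--Selberg bound already yields a symmetric $O(q^{3/4+\varepsilon})$ bound'' --- does not hold, and it is the crux of the lemma. The off-diagonal terms have $n\equiv\pm m\pmod q$ with $n\neq\pm m$, so for each $m$ the relevant $n$ form a progression modulo $q$ inside a range of length $\ll q^{3/2+\varepsilon}/m$; this is a \emph{sparse} set. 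Cauchy--Schwarz forces you to drop the congruence in the square term and apply Rankin--Selberg over the full range $n\ll q^{3/2+\varepsilon}/m$, which loses a factor of about $q^{1/2}$ and yields only $O(q^{5/4+\varepsilon})$ (one checks this directly with the parametrization $n=m+\ell q$, $m\ell\ll q^{1/2+\varepsilon}$). There is no known mean-square bound for $|\lambda_f(n)|^2$ over progressions or short intervals of the relevant length that rescues this. The best ``direct'' bound uses the pointwise estimate $|\lambda_f(n)|\ll n^{\theta+\varepsilon}$ and gives $q^{3/4+3\theta/2+\varepsilon}$ --- which is exactly why that term appears in the error of the lemma. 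If your claim were true, the error term would simply be $O(q^{3/4+\varepsilon})$ and none of the remaining machinery (yours or the paper's) would be needed; the fact that the stated error is larger should have been a warning sign.

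Your fallback sketch also diverges from what actually works and is not carried out. The paper restricts the direct ($\theta$-dependent) bound to the range $M_1<q/3$, and for $M_1\geq q/3$ detects the congruence by additive characters with $r\in\{1,q_1,q_2,q\}$, then applies \emph{Poisson summation to the $m$-sum} and the uniform Wilton-type bound of Lemma 2.3 to the resulting $n$-sum; Voronoi summation is not used here (it is used for $S_2$). Moreover, the asymmetric terms $\left(\frac{q_1}{q_2}+\frac{q_2}{q_1}\right)q^{3/4+\varepsilon}$ do not come from mismatched Voronoi dual lengths as you assert: they arise from the coprimality corrections, namely the contribution of $m$ with $q_2\mid m$ (resp.\ $q_1\mid m$) that must be added back when one completes the $m$-sum before applying Poisson at frequency $q_1$ (resp.\ $q_2$). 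Finally, your parametrization $n=m+\ell q$, $\ell\geq 1$ (and $n=\ell q-m$) omits the terms with $n<m$, i.e.\ $m>q$ and $n=m-\ell q$ small; this is precisely the range $M_1\geq q/3$ where the trivial bound fails and the Poisson--Wilton argument is indispensable. As it stands, the off-diagonal estimate is a genuine gap.
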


\begin{proof}
We write
\bea
S_{11}=S_{11}^0+S_{11}^{\flat},
\eea
where $S_{11}^0$ is the diagonal term from $n=\pm m$ and $S_{11}^{\flat}$ is the remaining
terms. Then
\bna
S_{11}^0=\frac{\phi(q)}{2}\sum_{m\geq 1 \atop (m,q)=1}
\frac{\lambda_f(m)}{m}
V\left(\frac{\pi^{\frac{3}{2}}m^2}{q^{\frac{3}{2}}}\right).
\ena
By the definition of $V(y)$ in (2.1), we have
\bna
S_{11}^0
&=&\frac{\phi(q)}{2}\frac{1}{2\pi i}\int\limits_{(1)}
\left\{\sum_{m\geq 1 \atop (m,q)=1}\frac{\lambda_f(m)}{m^{1+2u}}\right\}
\left(\frac{\pi^{\frac{3}{2}}}{q^{\frac{3}{2}}}\right)^{-u}
\frac{\Gamma\left(\frac{1+2u+2it}{4}\right)
\Gamma\left(\frac{1+2u-2it}{4}\right)
\Gamma\left(\frac{1+2u}{4}\right)}
{\Gamma\left(\frac{1+2it}{4}\right)
\Gamma\left(\frac{1-2it}{4}\right)
\Gamma\left(\frac{1}{4}\right)}G(u)\frac{\mathrm{d}u}{u}\\
&=&\frac{\phi(q)}{2}\frac{1}{2\pi i}\int\limits_{(1)}
\prod_{p|q}\left(1-\frac{\lambda_f(p)}{p^{1+2u}}+\frac{1}{p^{2+4u}}\right)L(1+2u,f)
\left(\frac{\pi^{\frac{3}{2}}}{q^{\frac{3}{2}}}\right)^{-u}\\
&&\frac{\Gamma\left(\frac{1+2u+2it}{4}\right)
\Gamma\left(\frac{1+2u-2it}{4}\right)
\Gamma\left(\frac{1+2u}{4}\right)}
{\Gamma\left(\frac{1+2it}{4}\right)
\Gamma\left(\frac{1-2it}{4}\right)
\Gamma\left(\frac{1}{4}\right)}G(u)\frac{\mathrm{d}u}{u}.
\ena
Shifting the contour of integration to $\mbox{Re}(u)=-\frac{1}{2}+\varepsilon$, we have
\bea
S_{11}^0=\frac{\phi(q)}{2}\left(1-\frac{\lambda_f(q_1)}{q_1}+\frac{1}{q_1^2}\right)
\left(1-\frac{\lambda_f(q_2)}{q_2}+\frac{1}{q_2^2}\right)L(1,f)
+O\left(q^{\frac{1}{4}+\varepsilon}\right).
\eea

Next we bound $S_{11}^{\flat}$. By applying smooth partitions of unity to the variables
$m$ and $n$, we are led to estimating
\bna
S_{11}^{\flat}(\pm, M_1, N_1)=\phi(q)\sum_{(m,q)=1}
\frac{1}{\sqrt{m}}\omega_1\left(\frac{m}{M_1}\right)
\sum_{n\equiv \pm m \bmod q \atop n\neq \pm m }
\frac{\lambda_f(n)}{\sqrt{n}}\omega_2\left(\frac{n}{N_1}\right)
V\left(\frac{\pi^{\frac{3}{2}}m n }{q^{\frac{3}{2}}}\right),
\ena
with $M_1N_1\ll q^{\frac{3}{2}+\varepsilon}$ by the properties of $V(y)$ in Lemma 2.2,
where $\omega_j(x)$, $j=1,2$, are smooth functions on $[1,2]$ satisfying
$\omega_j(x)\ll_j 1$.

For $M_1<q/3$, we have, trivially,
\bea
S_{11}^{\flat}(\pm, M_1, N_1)\ll q N_1^{\theta}\sum_{m}
\frac{1}{\sqrt{m}}\omega_1\left(\frac{m}{M_1}\right)
\sum_{1\leq k<N_1/q}
\frac{1}{\sqrt{kq}}
\ll N_1^{\theta} \sqrt{M_1N_1}
\ll q^{\frac{3}{4}+\frac{3\theta}{2}+\varepsilon}.
\eea
For $M_1\geq q/3$, the condition $m\neq \pm n$ is moot. Using the relation
\bna
\frac{1}{q}\sum_{r|q}\sideset{}{^*}\sum_{a\bmod r}
e\left(\frac{(n\mp m)a}{r}\right)
=\left\{\begin{array}{ll}
1, &\mbox{if $n \equiv\pm m \bmod q$},\\
0, &\mbox{otherwise},
\end{array}
\right.
\ena
where the $*$ denotes that the summation is restricted by the condition
$(a,r)=1$,
we have (notice that for $q=q_1q_2$, we have $r=1,q_1,q_2$ or $q$)
\bea
&&S_{11}^{\flat}(\pm, M_1, N_1)\nonumber\\
&=&\frac{\phi(q)}{q}\sum_{(m,q)=1}
\frac{1}{\sqrt{m}}\omega_1\left(\frac{m}{M_1}\right)
\sum_{n\geq 1}
\frac{\lambda_f(n)}{\sqrt{n}}\omega_2\left(\frac{n}{N_1}\right)
V\left(\frac{\pi^{\frac{3}{2}}m n}{q^{\frac{3}{2}}}\right)\nonumber\\
&&+\frac{\phi(q)}{q}\sum_{(m,q)=1}
\frac{1}{\sqrt{m}}\omega_1\left(\frac{m}{M_1}\right)
\sum_{n\geq 1}
\frac{\lambda_f(n)}{\sqrt{n}}\omega_2\left(\frac{n}{N_1}\right)
V\left(\frac{\pi^{\frac{3}{2}}m n}{q^{\frac{3}{2}}}\right)
\sideset{}{^*}\sum_{a\bmod q_1}
e\left(\frac{(n\mp m)a}{q_1}\right)\nonumber\\
&&+\frac{\phi(q)}{q}\sum_{(m,q)=1}
\frac{1}{\sqrt{m}}\omega_1\left(\frac{m}{M_1}\right)
\sum_{n\geq 1}
\frac{\lambda_f(n)}{\sqrt{n}}\omega_2\left(\frac{n}{N_1}\right)
V\left(\frac{\pi^{\frac{3}{2}}m n}{q^{\frac{3}{2}}}\right)
\sideset{}{^*}\sum_{a\bmod q_2}
e\left(\frac{(n\mp m)a}{q_2}\right)\nonumber\\
&&+\frac{\phi(q)}{q}\sum_{(m,q)=1}
\frac{1}{\sqrt{m}}\omega_1\left(\frac{m}{M_1}\right)
\sum_{n\geq 1}
\frac{\lambda_f(n)}{\sqrt{n}}\omega_2\left(\frac{n}{N_1}\right)
V\left(\frac{\pi^{\frac{3}{2}}m n}{q^{\frac{3}{2}}}\right)
\sideset{}{^*}\sum_{a\bmod q}
e\left(\frac{(n\mp m)a}{q}\right)\nonumber\\
&=&\sum_{j=1}^4E_j(\pm,M_1,N_1),
\eea
say.

Trivially, we have
\bea
E_1(\pm,M_1,N_1)\ll \sqrt{M_1 N_1}\ll q^{\frac{3}{4}+\varepsilon}.
\eea

To estimate $E_2(\pm,M_1,N_1)$, we note that $(m,q)\neq 1$ implies that
$q_1|m$ or $q_2|m$, whereas
\bna
&&\frac{\phi(q)}{q}\sum_{q_i|m}
\frac{1}{\sqrt{m}}\omega_1\left(\frac{m}{M_1}\right)
\sum_{n\geq 1}
\frac{\lambda_f(n)}{\sqrt{n}}\omega_2\left(\frac{n}{N_1}\right)
V\left(\frac{\pi^{\frac{3}{2}}m n}{q^{\frac{3}{2}}}\right)
\sideset{}{^*}\sum_{a\bmod q_1}
e\left(\frac{(n\mp m)a}{q_1}\right)\nonumber\\
&\ll& q_1\sum_{m\geq 1}
\frac{1}{\sqrt{mq_i}}\omega_1\left(\frac{mq_i}{M_1}\right)
\sum_{n\geq 1}
\frac{|\lambda_f(n)|}{\sqrt{n}}\omega_2\left(\frac{n}{N_1}\right)
V\left(\frac{\pi^{\frac{3}{2}}mq_i n}{q^{\frac{3}{2}}}\right)\nonumber\\
&\ll& \frac{q_1}{q_i}q^{\frac{3}{4}+\varepsilon}.
\ena
Thus
\bna
E_2(\pm,M_1,N_1)&=&\frac{\phi(q)}{q}\sum_{n\geq 1}
\frac{\lambda_f(n)}{\sqrt{n}}\omega_2\left(\frac{n}{N_1}\right)\sideset{}{^*}\sum_{a\bmod q_1}
e\left(\frac{na}{q_1}\right)\sum_{m\geq 1}
\frac{1}{\sqrt{m}}\omega_1\left(\frac{m}{M_1}\right)\nonumber\\
&&V\left(\frac{\pi^{\frac{3}{2}}m n}{q^{\frac{3}{2}}}\right)
e\left(\frac{\mp ma}{q_1}\right)+O\left(\frac{q_1}{q_2}q^{\frac{3}{4}+\varepsilon}
+q^{\frac{3}{4}+\varepsilon}\right).
\ena
Reducing the $m$-sum to the residue classes and applying
Poisson summation formula, we have
\bna
m\mbox{-sum}&=&\sum_{\gamma\bmod q_1}
e\left(\frac{\mp \gamma a}{q_1}\right)\sum_{m \equiv \gamma \bmod q_1}
\frac{1}{\sqrt{m}}\omega_1\left(\frac{m}{M_1}\right)
V\left(\frac{\pi^{\frac{3}{2}}m n}{q^{\frac{3}{2}}}\right)\\
&=&\frac{1}{q_1}\sum_{\gamma\bmod q_1}
e\left(\frac{\mp \gamma a}{q_1}\right)\sum_{m\in \mathbb{Z}}
e\left(\frac{\gamma m}{q_1}\right)
\int_{\mathbb{R}}\frac{1}{\sqrt{u}}
\omega_1\left(\frac{u}{M_1}\right)
V\left(\frac{\pi^{\frac{3}{2}}n u}{q^{\frac{3}{2}}}\right)
e\left(-\frac{m u}{q_1}\right)\mathrm{d}u\\
&=&\sqrt{M_1}\sum_{m\equiv \pm a\bmod q_1}J(m,n),
\ena
where
\bna
J(m,n)=\int_{\mathbb{R}}\frac{\omega_1(u)}{\sqrt{u}}
V\left(\frac{\pi^{\frac{3}{2}}n M_1 u}{q^{\frac{3}{2}}}\right)
e\left(-\frac{m M_1 u}{q_1}\right)\mathrm{d}u.
\ena
By partial integration $j$ times, we have
\bna
J(m,n)\ll_j\left(1+\frac{|m|M_1}{q_1}\right)^{-j}
\ena
for any $j\geq 0$. Thus
\bna
E_2(\pm,M_1,N_1)
=\frac{\phi(q)\sqrt{M_1}}{q}\sum_{|m|<\frac{q_1^{1+\varepsilon}}{M_1}}\sum_{n\geq 1}
\frac{\lambda_f(n)}{\sqrt{n}}\omega_2\left(\frac{n}{N_1}\right)
J(m,n)e\left(\frac{\pm m n}{q_1}\right)
+O\left(\frac{q_1}{q_2}q^{\frac{3}{4}+\varepsilon}
+q^{\frac{3}{4}+\varepsilon}\right).
\ena
Now we consider the $n$-sum. By partial integration once and Lemma 2.3, we have
\bna
&&\sum_{n\geq 1}
\frac{\lambda_f(n)}{\sqrt{n}}\omega_2\left(\frac{n}{N_1}\right)
J(m,n)e\left(\frac{\pm m n}{q_1}\right)\\
&=&\int_0^{\infty}\frac{1}{\sqrt{u}}\omega_2\left(\frac{u}{N_1}\right)
J(m,u)\mathrm{d}\sum_{n\leq u}\lambda_f(n)e\left(\frac{\pm m n}{q_1}\right)\\
&=&\int_0^{\infty}\left(\sum_{n\leq u}
\lambda_f(n)e\left(\frac{\pm m n}{q_1}\right)\right)
\left(\frac{1}{\sqrt{u}}\omega_2\left(\frac{u}{N_1}\right)
J(m,u)\right)'\mathrm{d}u\\
&\ll& \int_{N_1}^{2N_1} u^{\frac{1}{2}+\varepsilon} u^{-\frac{3}{2}}\mathrm{d}u\\
&\ll& N_1^{\varepsilon}.
\ena
Hence,
\bea
E_2(\pm,M_1,N_1)\ll \frac{q_1^{1+\varepsilon}}{\sqrt{M_1}}+
\frac{q_1}{q_2}q^{\frac{3}{4}+\varepsilon}
+q^{\frac{3}{4}+\varepsilon}
\ll \frac{q_1}{q_2}q^{\frac{3}{4}+\varepsilon}
+q^{\frac{3}{4}+\varepsilon}.
\eea
Similarly, we have
\bea
E_3(\pm,M_1,N_1)
\ll\frac{q_2}{q_1}q^{\frac{3}{4}+\varepsilon}
+q^{\frac{3}{4}+\varepsilon}.
\eea
In the following we bound
\bna
E_4(\pm,M_1,N_1)=\frac{\phi(q)}{q}\sum_{(m,q)=1}
\frac{1}{\sqrt{m}}\omega_1\left(\frac{m}{M_1}\right)
\sum_{n\geq 1}
\frac{\lambda_f(n)}{\sqrt{n}}\omega_2\left(\frac{n}{N_1}\right)
V\left(\frac{\pi^{\frac{3}{2}}m n}{q^{\frac{3}{2}}}\right)
\sideset{}{^*}\sum_{a\bmod q}
e\left(\frac{(n\mp m)a}{q}\right).
\ena
The contribution from $(m,q)\neq 1$ is
\bna
&&\frac{\phi(q)}{q}\sum_{(m,q)\neq 1}
\frac{1}{\sqrt{m}}\omega_1\left(\frac{m}{M_1}\right)
\sum_{n\geq 1}
\frac{\lambda_f(n)}{\sqrt{n}}\omega_2\left(\frac{n}{N_1}\right)
V\left(\frac{\pi^{\frac{3}{2}}m n}{q^{\frac{3}{2}}}\right)
\sideset{}{^*}\sum_{a\bmod q}
e\left(\frac{(n\mp m)a}{q}\right)\\
&=&\Delta_1+\Delta_2+\Delta_3,
\ena
where
\bna
\Delta_1&=&\frac{\phi(q)}{q}\sum_{q|m}
\frac{1}{\sqrt{m}}\omega_1\left(\frac{m}{M_1}\right)
\sum_{n\geq 1}
\frac{\lambda_f(n)}{\sqrt{n}}\omega_2\left(\frac{n}{N_1}\right)
V\left(\frac{\pi^{\frac{3}{2}}m n}{q^{\frac{3}{2}}}\right)
\sideset{}{^*}\sum_{a\bmod q}
e\left(\frac{(n\mp m)a}{q}\right)\\
\Delta_2&=&\frac{\phi(q)}{q}\sum_{q_1|m \atop (m,q_2)=1}
\frac{1}{\sqrt{m}}\omega_1\left(\frac{m}{M_1}\right)
\sum_{n\geq 1}
\frac{\lambda_f(n)}{\sqrt{n}}\omega_2\left(\frac{n}{N_1}\right)
V\left(\frac{\pi^{\frac{3}{2}}m n}{q^{\frac{3}{2}}}\right)
\sideset{}{^*}\sum_{a\bmod q}
e\left(\frac{(n\mp m)a}{q}\right)\\
\Delta_3&=&\frac{\phi(q)}{q}\sum_{q_2|m \atop (m,q_1)=1}
\frac{1}{\sqrt{m}}\omega_1\left(\frac{m}{M_1}\right)
\sum_{n\geq 1}
\frac{\lambda_f(n)}{\sqrt{n}}\omega_2\left(\frac{n}{N_1}\right)
V\left(\frac{\pi^{\frac{3}{2}}m n}{q^{\frac{3}{2}}}\right)
\sideset{}{^*}\sum_{a\bmod q}e\left(\frac{(n\mp m)a}{q}\right).
\ena
We have
\bna
\Delta_1&=&\frac{\phi(q)}{q}\sum_{q|m}
\frac{1}{\sqrt{m}}\omega_1\left(\frac{m}{M_1}\right)
\sum_{n\geq 1}
\frac{\lambda_f(n)}{\sqrt{n}}\omega_2\left(\frac{n}{N_1}\right)
V\left(\frac{\pi^{\frac{3}{2}}m n}{q^{\frac{3}{2}}}\right)
\sideset{}{^*}\sum_{a_1\bmod q_1}e\left(\frac{na_1}{q_1}\right)
\sideset{}{^*}\sum_{a_2\bmod q_2}e\left(\frac{na_2}{q_2}\right)\\
&\ll& \sum_{m}
\frac{1}{\sqrt{mq}}\omega_1\left(\frac{mq}{M_1}\right)
\sum_{n\geq 1}
\frac{|\lambda_f(n)|}{\sqrt{n}}\omega_2\left(\frac{n}{N_1}\right)(n,q_1)(n,q_2)\\
&\ll&\sqrt{\frac{M_1 N_1}{q}}\ll  q^{\frac{1}{4}+\varepsilon}
\ena
and
\bna
\Delta_2&=&\frac{\phi(q)}{q}\sum_{(m,q_2)=1}
\frac{1}{\sqrt{mq_1}}\omega_1\left(\frac{mq_1}{M_1}\right)
\sum_{n\geq 1}
\frac{\lambda_f(n)}{\sqrt{n}}\omega_2\left(\frac{n}{N_1}\right)
V\left(\frac{\pi^{\frac{3}{2}}q_1m n}{q^{\frac{3}{2}}}\right)\\
&&\sideset{}{^*}\sum_{a_1\bmod q_1}e\left(\frac{na_1}{q_1}\right)
\sideset{}{^*}\sum_{a_2\bmod q_2}e\left(\frac{(n\mp mq_1)a_2}{q_2}\right)\\
&=&\frac{\phi(q)}{q}\sum_{(m,q_2)=1}
\frac{1}{\sqrt{mq_1}}\omega_1\left(\frac{mq_1}{M_1}\right)
\sum_{n\geq 1}
\frac{\lambda_f(n)}{\sqrt{n}}\omega_2\left(\frac{n}{N_1}\right)
V\left(\frac{\pi^{\frac{3}{2}}q_1m n}{q^{\frac{3}{2}}}\right)\\
&&\left(q_11_{q_1|n}-1\right)
\left(q_21_{n\equiv \pm mq_1\bmod q_2}-1\right)\\
&\ll& q^{\frac{3}{4}+\frac{3\theta}{2}+\varepsilon},
\ena
where we have used the fact that the condition $n\equiv \pm mq_1 \bmod q_2$
implies that $mq_1\asymp |k|q_2$, $1\leq |k|\ll M_1/q_2$.
Similarly,
\bna
\Delta_3\ll q^{\frac{3}{4}+\frac{3\theta}{2}+\varepsilon}.
\ena
Therefore,
\bna
E_4(\pm,M_1,N_1)
&=&\frac{\phi(q)}{q}
\sum_{n\geq 1}\frac{\lambda_f(n)}{\sqrt{n}}\omega_2\left(\frac{n}{N_1}\right)
\sideset{}{^*}\sum_{a\bmod q}e\left(\frac{na}{q}\right)
\nonumber\\
&&\sum_{m\geq 1}\frac{1}{\sqrt{m}}\omega_1\left(\frac{m}{M_1}\right)
V\left(\frac{\pi^{\frac{3}{2}}m n}{q^{\frac{3}{2}}}\right)
e\left(\frac{\mp ma}{q}\right)
+O\left(q^{\frac{3}{4}+\frac{3\theta}{2}+\varepsilon}\right).
\ena
By Poisson summation formula, we have
\bna
m\mbox{-sum}&=&\sum_{\gamma \bmod q}e\left(\frac{\mp\gamma a}{q}\right)
\sum_{m\equiv \gamma\bmod q}\frac{1}{\sqrt{m}}\omega_1\left(\frac{m}{M_1}\right)
V\left(\frac{\pi^{\frac{3}{2}}m n}{q^{\frac{3}{2}}}\right)\\
&=&\frac{1}{q}\sum_{\gamma \bmod q}e\left(\frac{\mp \gamma a}{q}\right)
\sum_{m\in \mathbb{Z}}e\left(\frac{\gamma m}{q}\right)
\int_{\mathbb{R}}
\frac{1}{\sqrt{u}}\omega_1\left(\frac{u}{M_1}\right)
V\left(\frac{\pi^{\frac{3}{2}}n u}{q^{\frac{3}{2}}}\right)
e\left(-\frac{m u}{q}\right)\mathrm{d}u\\
&=&\frac{\sqrt{M_1}}{q}\sum_{m\in \mathbb{Z}}
\left(\sum_{\gamma \bmod q}e\left(\frac{(m\mp a)\gamma}{q}\right)\right)
\int_{\mathbb{R}}
\frac{\omega_1(u)}{\sqrt{u}}
V\left(\frac{\pi^{\frac{3}{2}}n M u}{q^{\frac{3}{2}}}\right)
e\left(-\frac{m M u}{q}\right)\mathrm{d}u\\
&=&\sqrt{M_1}\sum_{|m|<\frac{q^{1+\varepsilon}}{M_1}
\atop m\equiv \pm a \bmod q}K(m,n),
\ena
where
\bna
K(m,n)=\int_{\mathbb{R}}
\frac{\omega_1(u)}{\sqrt{u}}
V\left(\frac{\pi^{\frac{3}{2}}n M_1u}{q^{\frac{3}{2}}}\right)
e\left(-\frac{m M_1 u}{q}\right)\mathrm{d}u.
\ena
By partial integration $j$ times we have
$K(m,n)\ll_j \left(1+\frac{|m|M}{q}\right)^{-j}$ for any $j\geq 0$.
Thus
\bna
E_4(\pm,M_1,N_1)
&=&\frac{\phi(q)\sqrt{M_1}}{q}\sum_{|m|<\frac{q^{1+\varepsilon}}{M_1}}
\sum_{n\geq 1}\frac{\lambda_f(n)}{\sqrt{n}}\omega_2\left(\frac{n}{N_1}\right)
e\left(\frac{\pm m n}{q}\right)K(m,n)+
O\left(q^{\frac{3}{4}+\frac{3\theta}{2}+\varepsilon}\right).
\ena
As before, by partial integration once and Lemma 2.3, we obtain, for
$M_1\geq q/3$,
\bea
E_4(\pm,M_1,N_1)\ll \frac{q^{1+\varepsilon}}{\sqrt{M_1}}+q^{\frac{3}{4}+\frac{3\theta}{2}+\varepsilon}
\ll q^{\frac{1}{2}+\varepsilon}+q^{\frac{3}{4}+\frac{3\theta}{2}+\varepsilon}.
\eea
By (4.7)-(4.12), we obtain
\bea
S_{11}^{\flat}(\pm, M_1, N_1)\ll q^{\frac{3}{4}+\frac{3\theta}{2}+\varepsilon}
+\left(\frac{q_1}{q_2}+\frac{q_2}{q_1}\right) q^{\frac{3}{4}+\varepsilon}.
\eea
Then Lemma 4.1 follows from (4.5), (4.6) and (4.13).

\end{proof}

We estimate $S_{13}$ and $S_{14}$ similarly to get
\begin{lemma}
For any $\varepsilon>0$, we have
\bna
S_{13}\ll_{f,\varepsilon} q_1+\frac{q_1}{q_2}q^{\frac{3}{4}+\varepsilon}+
q^{\frac{3}{4}+\frac{3\theta}{2}+\varepsilon}
\ena
and
\bna
S_{14}\ll_{f,\varepsilon} q_2+\frac{q_2}{q_1}q^{\frac{3}{4}+\varepsilon}+
q^{\frac{3}{4}+\frac{3\theta}{2}+\varepsilon}.
\ena
\end{lemma}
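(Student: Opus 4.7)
The plan is to treat $S_{13}$ by adapting the argument for $S_{11}^{\flat}$ in Lemma 4.1 almost verbatim, with the modulus $q$ replaced by the prime $q_1$; the bound for $S_{14}$ will then follow by exchanging the roles of $q_1$ and $q_2$.

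First I would split off the diagonal contribution $n = \pm m$, which equals $\frac{\phi(q_1)}{2}\sum_{(m,q)=1}\frac{\lambda_f(m)}{m}V\left(\frac{\pi^{3/2} m^2}{q^{3/2}}\right)$. Expanding $V$ by its Mellin representation and shifting the contour to $\operatorname{Re}(u) = -\frac{1}{2} + \varepsilon$ as in the computation of $S_{11}^{0}$, this produces a main term of size $\ll \phi(q_1) L(1,f) \ll q_1$ together with an error $O(q^{1/4+\varepsilon})$, accounting for the first term of the stated bound. A smooth dyadic decomposition then reduces the off-diagonal part to blocks $S_{13}^{\flat}(\pm, M_1, N_1)$ with $M_1 N_1 \ll q^{3/2+\varepsilon}$. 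For $M_1 < q_1/3$, the congruence $n \equiv \pm m \pmod{q_1}$ together with $n \neq \pm m$ confines $n$ to $\ll N_1/q_1$ residues in $[N_1, 2N_1]$; combined with the Ramanujan bound $\lambda_f(n) \ll n^{\theta}$ and the outer factor $\phi(q_1) \asymp q_1$, this yields the trivial estimate $\ll N_1^{\theta} \sqrt{M_1 N_1} \ll q^{3/4 + 3\theta/2 + \varepsilon}$.

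For $M_1 \geq q_1/3$ the restriction $n \neq \pm m$ becomes harmless, and I detect $n \equiv \pm m \pmod{q_1}$ by additive characters of divisors $r \in \{1, q_1\}$, since $q_1$ is prime. The $r = 1$ contribution is bounded trivially by $(\phi(q_1)/q_1)\sqrt{M_1 N_1} \ll q^{3/4 + \varepsilon}$. For $r = q_1$, I would first separate the contributions from $m$ with $(m,q) \neq 1$: the case $q_1 \mid m$ produces Ramanujan sums in $n$ and is negligible after trivial bounds, while $q_2 \mid m$ contributes $\ll (q_1/q_2) q^{3/4+\varepsilon}$ by direct trivial estimation, giving the middle term of the bound. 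The remaining range $(m, q) = 1$ is then treated exactly as in the analysis of $E_4(\pm, M_1, N_1)$ in Lemma 4.1: Poisson summation in $m$ restricts the dual variable to $|\widetilde{m}| \ll q_1^{1+\varepsilon}/M_1$, and partial integration combined with Lemma 2.3 on the resulting oscillatory $n$-sum yields a contribution of size $\ll q_1^{1+\varepsilon}/\sqrt{M_1} \ll q_1^{1/2+\varepsilon}$, which is absorbed by the third term.

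The main obstacle I anticipate is the bookkeeping for the $(m,q) \neq 1$ ranges when the detector is modulo $q_1$: specifically, when $q_2 \mid m$ the detector offers no cancellation in the $m$-variable and one is forced to rely solely on the Ramanujan bound for $\lambda_f$, which is precisely what produces the $(q_1/q_2) q^{3/4+\varepsilon}$ term in the bound. Once this contribution is accounted for, summing over both signs and all dyadic blocks completes the estimate for $S_{13}$, and the bound for $S_{14}$ follows by the symmetric argument with $q_1$ and $q_2$ exchanged.
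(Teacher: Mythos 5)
Your proposal is correct and is essentially the argument the paper intends: the paper proves Lemma 4.2 only by the remark that $S_{13}$, $S_{14}$ are "estimated similarly" to $S_{11}$, and your adaptation (diagonal $\ll \phi(q_1)L(1,f)\ll q_1$, trivial bound for $M_1<q_1/3$, detection modulo $q_1$ with $r\in\{1,q_1\}$ followed by Poisson and Lemma 2.3 for $M_1\geq q_1/3$, with the $q_2\mid m$ range producing $\frac{q_1}{q_2}q^{3/4+\varepsilon}$) is exactly that transplantation, the range $(m,q)=1$ being handled as for $E_2$ rather than $E_4$. The only detail worth adding is that the condition $(n,q_2)=1$ must be dropped before invoking Lemma 2.3, the $q_2\mid n$ correction being bounded trivially and absorbed into the stated error terms.
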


Now (3.2) follows from (4.3), (4.4) and Lemmas 4.1 and 4.2.

\medskip

\section{Estimation of $S_2$}
\setcounter{equation}{0}
\medskip

Recall that
\bea
S_2&=&\frac{1}{\sqrt{q}}\sum_{m\geq 1 \atop (m,q)=1}\frac{1}{\sqrt{m}}
\sum_{n\geq 1 \atop (n,q)=1}\frac{\lambda_f(n)}{\sqrt{n}}
V\left(\frac{\pi^{\frac{3}{2}}m n}{q^{\frac{3}{2}}}\right)
\mathscr{D}_q(m,n),
\eea
where
\bna
\mathscr{D}_q(m,n)=\sideset{}{^\dag}\sum_{\chi \bmod q \atop \chi(-1)=1}
\chi(m)\overline{\chi}(n)\tau(\chi)
\ena
with $\tau(\chi)=\sum\limits_{a\bmod q}\chi(a)e\left(\frac{a}{q}\right)$.
Note that $\tau(\chi_1\chi_2)=\chi_1(q_2)\chi_2(q_1)\tau(\chi_1)\tau(\chi_2)$.
By the orthogonality of Dirichlet characters, we have
\bea
\mathscr{D}_q(m,n)&=&\frac{1}{2}\sideset{}{^\dag}
\sum_{\chi \bmod q}\left(1+\chi(-1)\right)
\chi(m)\overline{\chi}(n)\tau(\chi)\nonumber\\
&=&\frac{1}{2}\sum_{\pm}\sideset{}{^\dag}
\sum_{\chi \bmod q}\chi(\pm m)\overline{\chi}(n)\tau(\chi)\nonumber\\
&=&\frac{1}{2}\sum_{\pm}
\left(\sideset{}{^\dag}\sum_{\chi_1 \bmod q_1}
\chi_1(\pm mq_2)\overline{\chi_1}(n)\tau(\chi_1)\right)
\left(\sideset{}{^\dag}\sum_{\chi_2 \bmod q_2}
\chi_2(\pm mq_1)\overline{\chi_2}(n)\tau(\chi_2)\right)\nonumber\\
&=&\frac{1}{2}\sum_{\pm}
\left(\sideset{}{^*}\sum_{a \bmod q_1}e\left(\frac{a}{q_1}\right)
\sideset{}{^\dag}\sum_{\chi_1 \bmod q_1}
\chi_1(\pm amq_2)\overline{\chi_1}(n)\right)\nonumber\\
&&\left(\sideset{}{^*}\sum_{b \bmod q_2}e\left(\frac{b}{q_2}\right)
\sideset{}{^\dag}\sum_{\chi_2 \bmod q_2}
\chi_2(\pm bmq_1)\overline{\chi_2}(n)\right)\nonumber\\
&=&\frac{1}{2}\sum_{\pm}\left(\phi(q_1)e\left(\pm\frac{\overline{m}_1
\overline{q_2}n}{q_1}\right)+1\right)
\left(\phi(q_2)e\left(\pm\frac{\overline{m}_2
\overline{q_1}n}{q_2}\right)+1\right),
\eea
where $\overline{m}_1m\equiv 1\bmod q_1$ and
$\overline{m}_2m\equiv 1\bmod q_2$.
Plugging (5.2) into (5.1), we obtain
\bea
S_2&=&\frac{\phi(q)}{2\sqrt{q}}\sum_{\pm}\sum_{m\geq 1 \atop (m,q)=1}\frac{1}{\sqrt{m}}
\sum_{n\geq 1 \atop (n,q)=1}\frac{\lambda_f(n)}{\sqrt{n}}
V\left(\frac{\pi^{\frac{3}{2}}m n}{q^{\frac{3}{2}}}\right)
e\left(\pm\frac{\overline{m}_1
\overline{q_2}n}{q_1}\pm\frac{\overline{m}_2
\overline{q_1}n}{q_2}\right)\nonumber\\
&&+\frac{\phi(q_1)}{2\sqrt{q}}\sum_{\pm}
\sum_{m\geq 1 \atop (m,q)=1}\frac{1}{\sqrt{m}}
\sum_{n\geq 1 \atop (n,q)=1}\frac{\lambda_f(n)}{\sqrt{n}}
V\left(\frac{\pi^{\frac{3}{2}}m n}{q^{\frac{3}{2}}}\right)
e\left(\pm\frac{\overline{m}_1
\overline{q_2}n}{q_1}\right)\nonumber\\
&&+\frac{\phi(q_2)}{2\sqrt{q}}\sum_{\pm}\sum_{m\geq 1 \atop (m,q)=1}\frac{1}{\sqrt{m}}
\sum_{n\geq 1 \atop (n,q)=1}\frac{\lambda_f(n)}{\sqrt{n}}
V\left(\frac{\pi^{\frac{3}{2}}m n}{q^{\frac{3}{2}}}\right)
e\left(\pm\frac{\overline{m}_2\overline{q_1}n}{q_2}\right)\nonumber\\
&&+\frac{1}{\sqrt{q}}\sum_{m\geq 1 \atop (m,q)=1}\frac{1}{\sqrt{m}}
\sum_{n\geq 1 \atop (n,q)=1}\frac{\lambda_f(n)}{\sqrt{n}}
V\left(\frac{\pi^{\frac{3}{2}}m n}{q^{\frac{3}{2}}}\right)\nonumber\\
&=&S_{21}+S_{22}+S_{23}+S_{24},
\eea
say. Trivially, we have
\bea
S_{24}
\ll \frac{1}{\sqrt{q}}\left(q^{\frac{3}{2}+\varepsilon}\right)^{\frac{1}{2}}
\ll q^{\frac{1}{4}+\varepsilon}.
\eea

\begin{lemma}
For any $\varepsilon>0$, we have
\bna
S_{21}\ll_{f,\varepsilon}q^{\frac{7}{8}+\frac{3\theta}{8(1+\theta)}+\varepsilon}
+\frac{q^{\frac{5}{4}+\frac{3\theta}{2}+\varepsilon}}{\min\{q_1,q_2\}}.
\ena
\end{lemma}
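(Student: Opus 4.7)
The plan is to estimate $S_{21}$ via a smooth dyadic decomposition followed by Voronoi summation in the $n$-variable and Poisson summation in the $m$-variable. First, introduce smooth partitions of unity in $m$ and $n$ to reduce the analysis to pieces $S_{21}(\pm,M,N)$ with $m\asymp M$, $n\asymp N$ and $MN\ll q^{3/2+\varepsilon}$ (by the rapid decay of $V$ from Lemma 2.2). Using the Chinese Remainder Theorem the combined phase simplifies:
\begin{equation*}
\pm\frac{\overline{m}_1\overline{q_2}}{q_1}\pm\frac{\overline{m}_2\overline{q_1}}{q_2}\equiv \frac{u_{\pm}\overline{m}}{q}\pmod{1},
\end{equation*}
for a unit $u_{\pm}\in(\mathbb{Z}/q\mathbb{Z})^{\times}$ depending only on the sign pattern ($u_{\pm}=\pm 1$ when both signs agree and a CRT-determined unit otherwise). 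Thus the inner $n$-sum in each dyadic piece is a smoothly weighted linear exponential sum with Hecke amplitudes and modulus exactly $q$.

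The principal route, effective for large $M$, is Voronoi summation (Lemma 2.4) applied with $c=q$ and $\overline{d}\equiv u_{\pm}\overline{m}\pmod{q}$. This recasts the $n$-sum as a dual sum over $n'\ll N^{*}:=q^{2+\varepsilon}/N$ with phase $e(\pm u_{\pm}^{-1}mn'/q)$ and oscillatory weight $\tilde\Phi^{\pm}(n'N/q^{2})$, where $\tilde\Phi^{\pm}(x)\ll\min(x^{1-\varepsilon},x^{-A})$. After interchanging the order of summation, I would apply Poisson summation modulo $q$ to the remaining $m$-sum
\begin{equation*}
\sum_{m\asymp M}\frac{\omega_{1}(m/M)}{\sqrt{m}}\,e\!\left(\tfrac{\pm u_{\pm}^{-1}n'm}{q}\right).
\end{equation*}
Only frequencies $k$ with $|kq\mp u_{\pm}^{-1}n'|\ll q^{1+\varepsilon}/M$ contribute, and in the regime $M\geq q^{1+\varepsilon}$ this forces $q\mid n'$. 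Writing $n'=q\ell$ with $\ell\ll q/N$ and using the multiplicativity bound $|\lambda_{f}(q\ell)|\ll q^{\theta}|\lambda_{f}(\ell)|$ coming from Kim--Sarnak then yields a contribution of order $q^{5/4+3\theta/2+\varepsilon}/\min\{q_{1},q_{2}\}$. The $\min\{q_{1},q_{2}\}$ denominator reflects the further loss when $m$ or $n$ falls on a multiple of the smaller prime factor of $q$, where the effective Voronoi modulus drops from $q$ to $q_{1}$ or $q_{2}$.

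The complementary route, preferred for smaller $M$, is to apply Lemma 2.3 directly to the original $n$-sum, treating $u_{\pm}\overline{m}/q$ as a fixed real frequency. Via partial summation this yields $\sum_{n\asymp N}\lambda_{f}(n)e(\alpha n)\omega(n/N)/\sqrt{n}\ll N^{\varepsilon}$, producing a bound of order $\sqrt{qM}\,q^{\varepsilon}$ per dyadic piece. Balancing the Wilton-type estimate $\sqrt{qM}$ against the Voronoi--Poisson estimate, which in the balanced regime scales essentially as $q^{3/2+3\theta/2+\varepsilon}/\sqrt{M}$, by choosing the transition cutoff $M_{0}\asymp q^{3(1+2\theta)/(4(1+\theta))}$ delivers the first error term $q^{7/8+3\theta/(8(1+\theta))+\varepsilon}$.

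The main obstacle is controlling the critical range $M\asymp N\asymp q^{3/4}$, where each of the two individual routes is marginally insufficient; here one must extract cancellation simultaneously from the Hecke coefficients on the dual side (via the oscillatory weight $\tilde\Phi^{\pm}$) and from the smooth $m$-sum after Poisson, while carefully accounting for the arithmetic splitting $q=q_{1}q_{2}$. Subcases $q_{1}\mid m$ and $q_{2}\mid m$ (and analogously for $n$) must be handled separately by Voronoi applications on the reduced modulus, and it is precisely these subcases, amplified by the Kim--Sarnak loss $q^{\theta}$ on the coefficients $\lambda_{f}(q_{i})$ and $\lambda_{f}(q_{i}^{2})$, that combine to produce the second error term with the $1/\min\{q_{1},q_{2}\}$ factor.
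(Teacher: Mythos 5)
Your toolkit is the right one---dyadic decomposition, CRT to fold the phase into $e(u_{\pm}\overline{m}n/q)$, Wilton's bound (Lemma 2.3) for small $M$, Poisson--Voronoi for large $M$---and your cutoff $M_0\asymp q^{3(1+2\theta)/(4(1+\theta))}$ with $\sqrt{qM_0}=q^{7/8+3\theta/(8(1+\theta))}$ is exactly the paper's $\beta_1$. But the argument as sketched does not close, for two reasons. First, a two-regime split in $M$ alone is insufficient. Your claimed Voronoi--Poisson scaling $q^{3/2+3\theta/2+\varepsilon}/\sqrt{M}$ meets $\sqrt{qM}$ only at $M\asymp q^{1+3\theta/2}$, where both equal $q^{1+3\theta/4}$, far above the target; at your stated cutoff $M_0$ the Voronoi side would still exceed $q^{1.3}$ for $\theta=7/64$. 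The genuine obstruction is the regime where $M$ is large but $N$ is small: there the dual length $q^{2}/N$ and the Ramanujan loss $(q^{2}/N)^{\theta}$ blow up. The paper therefore introduces a second parameter $\beta_2$ and three cases: for $M\le q^{\beta_1}$ it uses Wilton ($\ll q^{1/2+\beta_1/2}$); for $M>q^{\beta_1}$, $N\le q^{\beta_2}$ it applies Poisson in $m$ only and bounds the resulting complete sum by Weil, $C(m,n;q)\ll (m,n,q)^{1/2}q^{1/2}$, giving $\ll q^{1-\beta_1/2+\beta_2/2}$; and only for $M>q^{\beta_1}$, $N>q^{\beta_2}$ does it combine Poisson with Voronoi, where the key term is $q^{3/2-\beta_1/2-\beta_2/2}\left(q^{2-\beta_2}\right)^{\theta}$. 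All three balance at $q^{7/8+3\theta/(8(1+\theta))}$ for $\beta_1=\frac34+\frac{3\theta}{4(1+\theta)}$, $\beta_2=\frac12+\frac{3\theta}{2(1+\theta)}$. Your sketch omits the intermediate (Weil) case, which is the mechanism that actually makes the exponent attainable.

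Second, you misidentify the source of the term $q^{5/4+3\theta/2+\varepsilon}/\min\{q_1,q_2\}$. It cannot come from the $q\mid n'$ terms on the dual side (those carry no factor $1/q_i$), nor from ``$m$ or $n$ falling on a multiple of the smaller prime factor'' inside $S_{21}$, since both $m$ and $n$ are coprime to $q$ by the definition of $S_2$ in (5.1). It is simply the price of discarding the condition $(n,q)=1$, which must be done before either Lemma 2.3 or Lemma 2.4 can be applied to the $n$-sum: the removed terms with $q_i\mid n$ contribute $\ll \sqrt{q}\,\sqrt{MN}\,N^{\theta}/q_i\ll q^{5/4+3\theta/2+\varepsilon}/\min\{q_1,q_2\}$. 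Your proposal never addresses this coprimality condition, yet both of your routes require it to be removed first.
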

\begin{proof}
Applying smooth partitions of unity to the variables $m$ and $n$, respectively,
we need to bound
\bna
S_{21}(\pm, M_2, N_2)&=&\frac{\phi(q)}{\sqrt{q}}
\sum_{(m,q)=1}\frac{1}{\sqrt{m}}
\omega_1\left(\frac{m}{M_2}\right)
\sum_{(n,q)=1}\frac{\lambda_f(n)}{\sqrt{n}}
\omega_2\left(\frac{n}{N_2}\right)\\
&&V\left(\frac{\pi^{\frac{3}{2}}m n}{q^{\frac{3}{2}}}\right)
e\left(\pm\frac{\overline{m}_1
\overline{q_2}n}{q_1}\pm\frac{\overline{m}_2
\overline{q_1}n}{q_2}\right)
\ena
with $M_2N_2\ll q^{\frac{3}{2}+\varepsilon}$ for any $\varepsilon>0$.
Removing the condition $(n,q)=1$ at a cost of
$O\left(\frac{q^{\frac{5}{4}+\frac{3\theta}{2}+\varepsilon}}{\min\{q_1,q_2\}}
\right)$, we have
\bea
S_{21}(\pm, M_2, N_2)&=&\frac{\phi(q)}{\sqrt{q}}
\sum_{(m,q)=1}\frac{1}{\sqrt{m}}
\omega_1\left(\frac{m}{M_2}\right)
\sum_{n\geq 1}\frac{\lambda_f(n)}{\sqrt{n}}
\omega_2\left(\frac{n}{N_2}\right)
V\left(\frac{\pi^{\frac{3}{2}}m n}{q^{\frac{3}{2}}}\right)\nonumber\\
&&e\left(\pm\frac{\overline{m}_1
\overline{q_2}n}{q_1}\pm\frac{\overline{m}_2
\overline{q_1}n}{q_2}\right)
+O\left(\frac{q^{\frac{5}{4}+\frac{3\theta}{2}+\varepsilon}}{\min\{q_1,q_2\}}\right).
\eea
We distinguish three cases.

Case I. For $M_2\leq q^{\beta_1}$, by partial integration once and Lemma 2.3, we have
\bna
&&\sum_{n\geq 1}\frac{\lambda_f(n)}{\sqrt{n}}
\omega_2\left(\frac{n}{N_2}\right)
V\left(\frac{\pi^{\frac{3}{2}}m n}{q^{\frac{3}{2}}}\right)
e\left(\pm\frac{\overline{m}_1
\overline{q_2}n}{q_1}\pm\frac{\overline{m}_2
\overline{q_1}n}{q_2}\right)\\
&=&-\int_0^{\infty}\left(\sum_{n\leq u}\lambda_f(n)
e\left(\pm\frac{\overline{m}_1
\overline{q_2}n}{q_1}\pm\frac{\overline{m}_2
\overline{q_1}n}{q_2}\right)\right)
\left(\frac{1}{\sqrt{u}}
\omega_2\left(\frac{u}{N_2}\right)
V\left(\frac{\pi^{\frac{3}{2}}m u}{q^{\frac{3}{2}}}\right)\right)'\mathrm{d}u\\
&\ll& N_2^{\varepsilon}.
\ena
Thus by (5.5),
\bea
S_{21}(\pm, M_2, N_2)\ll N_2^{\varepsilon}\sqrt{qM_2}+
\frac{q^{\frac{5}{4}+\frac{3\theta}{2}+\varepsilon}}{\min\{q_1,q_2\}}
\ll q^{\frac{1}{2}+\frac{\beta_1}{2}+\varepsilon}
+\frac{q^{\frac{5}{4}+\frac{3\theta}{2}+\varepsilon}}{\min\{q_1,q_2\}}.
\eea

Case II. For $M_2> q^{\beta_1}$ and $N_2\leq q^{\beta_2}$, we
apply Poisson summation formula to the $m$-sum to get
\bea
m\mbox{-sum}&=&
\sum_{(m,q)=1}\frac{1}{\sqrt{m}}
\omega_1\left(\frac{m}{M_2}\right)
V\left(\frac{\pi^{\frac{3}{2}}m n}{q^{\frac{3}{2}}}\right)e\left(\pm\frac{\overline{m}_1
\overline{q_2}n}{q_1}\pm\frac{\overline{m}_2
\overline{q_1}n}{q_2}\right)\nonumber\\
&=&
\sideset{}{^*}\sum_{\gamma_1 \bmod q_1}\,
\sideset{}{^*}\sum_{\gamma_2\bmod q_2}
e\left(\pm\frac{\overline{\gamma}_1
\overline{q_2}n}{q_1}\pm\frac{\overline{\gamma}_2
\overline{q_1}n}{q_2}\right)
\sum_{m\equiv \gamma_1 \bmod q_1
\atop m\equiv \gamma_2 \bmod q_2 }\frac{1}{\sqrt{m}}
\omega_1\left(\frac{m}{M_2}\right)
V\left(\frac{\pi^{\frac{3}{2}}m n}{q^{\frac{3}{2}}}\right)\nonumber\\
&=&
\sideset{}{^*}\sum_{\gamma_1 \bmod q_1}\,
\sideset{}{^*}\sum_{\gamma_2\bmod q_2}
e\left(\pm\frac{\overline{\gamma}_1
\overline{q_2}n}{q_1}\pm\frac{\overline{\gamma}_2
\overline{q_1}n}{q_2}\right)
\sum_{m\equiv \gamma_1 q_2\overline{q_2}+\gamma_2q_1\overline{q_1}\bmod q }\frac{1}{\sqrt{m}}
\omega_1\left(\frac{m}{M_2}\right)
V\left(\frac{\pi^{\frac{3}{2}}m n}{q^{\frac{3}{2}}}\right)\nonumber\\
&=&\frac{1}{q}\quad
\sideset{}{^*}\sum_{\gamma_1 \bmod q_1}\,
\sideset{}{^*}\sum_{\gamma_2\bmod q_2}
e\left(\pm\frac{\overline{\gamma}_1
\overline{q_2}n}{q_1}\pm\frac{\overline{\gamma}_2
\overline{q_1}n}{q_2}\right)
\sum_{m\in \mathbb{Z}}
e\left(\frac{(\gamma_1 q_2\overline{q_2}+\gamma_2q_1\overline{q_1}) m}{q}\right)
\nonumber\\
&&\int_{\mathbb{R}}\frac{1}{\sqrt{u}}
\omega_1\left(\frac{u}{M_2}\right)
V\left(\frac{\pi^{\frac{3}{2}}nu}{q^{\frac{3}{2}}}\right)
e\left(-\frac{m u}{q}\right)\mathrm{d}u\nonumber\\
&=&\frac{\sqrt{M_2}}{q}\sum_{m\in \mathbb{Z}}C(m,n;q)I(m,n),
\eea
where
\bna
I(m,n)=\int_{\mathbb{R}}\frac{\omega_1(u)}{\sqrt{u}}
V\left(\frac{\pi^{\frac{3}{2}}nM_2u}{q^{\frac{3}{2}}}\right)
e\left(-\frac{m M_2u}{q}\right)\mathrm{d}u\ll_j
\left(1+\frac{|m|M_2}{q}\right)^{-j}
\ena
and
\bna
C(m,n;q)&=&\sideset{}{^*}\sum_{\gamma_1 \bmod q_1}\,
\sideset{}{^*}\sum_{\gamma_2\bmod q_2}
e\left(\pm\frac{\overline{\gamma}_1
\overline{q_2}n}{q_1}\pm\frac{\overline{\gamma}_2
\overline{q_1}n}{q_2}\right)
e\left(\frac{(\gamma_1 q_2\overline{q_2}
+\gamma_2q_1\overline{q_1}) m}{q}\right)\\
&=&\sideset{}{^*}\sum_{\gamma_1 \bmod q_1}\,
e\left(\frac{\overline{q_2}m\gamma_1\pm\overline{q_2}n\overline{\gamma}_1
}{q_1}\right)
\sideset{}{^*}\sum_{\gamma_2 \bmod q_2}\,
e\left(\frac{\overline{q_1}m\gamma_2\pm\overline{q_1}n\overline{\gamma}_2
}{q_2}\right)\\
&\ll& (m,n,q_1)^{\frac{1}{2}}q_1^{\frac{1}{2}}
(m,n,q_2)^{\frac{1}{2}}q_2^{\frac{1}{2}}
\ll (m,n,q)^{1/2}q^{1/2}.
\ena
Plugging these estimates into (5.5), we obtain
\bea
S_{21}(\pm, M_2, N_2)\ll
&\ll&\frac{\phi(q)\sqrt{M_2}}{q\sqrt{q}}
\sum_{n\geq 1}\frac{|\lambda_f(n)|}{\sqrt{n}}
\omega_2\left(\frac{n}{N_2}\right)
\sum_{|m|<q^{1+\varepsilon}/M_2}(m,n,q)^{\frac{1}{2}}q^{\frac{1}{2}}
+\frac{q^{\frac{5}{4}+\frac{3\theta}{2}+\varepsilon}}{\min\{q_1,q_2\}}\nonumber\\
&\ll&\frac{q^{1+\varepsilon}\sqrt{N_2}}{\sqrt{M_2}}
+\frac{q^{\frac{5}{4}+\frac{3\theta}{2}+\varepsilon}}{\min\{q_1,q_2\}}
\ll q^{1-\frac{\beta_1}{2}+\frac{\beta_2}{2}+\varepsilon}
+\frac{q^{\frac{5}{4}+\frac{3\theta}{2}+\varepsilon}}{\min\{q_1,q_2\}}.
\eea

Case III. For $M_2> q^{\beta_1}$ and $N_2> q^{\beta_2}$, by (5.8), we
have
\bea
S_{21}(\pm, M_2, N_2)&=&\frac{\phi(q)\sqrt{M_2}}{q\sqrt{q}}
\sum_{|m|<\frac{q^{1+\varepsilon}}{M_2}}\,
\sideset{}{^*}\sum_{\gamma_1 \bmod q_1}\,
\sideset{}{^*}\sum_{\gamma_2\bmod q_2}
e\left(\frac{(\gamma_1 q_2\overline{q_2}
+\gamma_2q_1\overline{q_1}) m}{q}\right)\nonumber\\
&&\int_{\mathbb{R}}\frac{\omega_1(u)}{\sqrt{u}}e\left(-\frac{m M_2 u}{q}\right)
\sum_{n\geq 1}\frac{\lambda_f(n)}{\sqrt{n}}
\omega_2\left(\frac{n}{N_2}\right)
V\left(\frac{\pi^{\frac{3}{2}}n M_2 u}{q^{\frac{3}{2}}}\right)\nonumber\\
&&e\left(\pm\frac{(\overline{\gamma}_1
q_2\overline{q_2}+\overline{\gamma}_2
q_1\overline{q_1})n}{q_1q_2}\right)\mathrm{d}u+O\left(\frac{q^{\frac{5}{4}+\frac{3\theta}{2}+\varepsilon}}{\min\{q_1,q_2\}}\right)\nonumber\\
&=&\frac{\phi(q)\sqrt{M_2}}{q\sqrt{qN_2}}
\sum_{|m|<\frac{q^{1+\varepsilon}}{M_2}}\,
\sideset{}{^*}\sum_{\gamma\bmod q}
e\left(\frac{\gamma m}{q}\right)
\int_{\mathbb{R}}\frac{\omega_1(u)}{\sqrt{u}}
e\left(-\frac{m M_2 u}{q}\right)
\nonumber\\
&&\left(\sum_{n\geq 1}\lambda_f(n)e\left(\pm\frac{\overline{\gamma}n}{q}\right)\frac{\sqrt{N_2}}{\sqrt{n}}
\omega_2\left(\frac{n}{N_2}\right)
V\left(\frac{\pi^{\frac{3}{2}}n M_2 u}{q^{\frac{3}{2}}}\right)
\right)\mathrm{d}u\nonumber\\
&&+O\left(\frac{q^{\frac{5}{4}+\frac{3\theta}{2}+\varepsilon}}{\min\{q_1,q_2\}}\right).
\eea
Applying Voronoi formula in Lemma 2.4 to the $n$-sum we get
\bna
n\mbox{-sum}=q\sum_{n\geq 1}\frac{\lambda_f(n)}{n}
e\left(\pm \frac{\gamma n}{q}\right)\Psi_u^+\left(\frac{nN_2}{q^2}\right)
+q\sum_{n\geq 1}\frac{\lambda_f(n)}{n}
e\left(\mp \frac{\gamma n}{q}\right)\Psi_u^-\left(\frac{nN_2}{q^2}\right).
\ena
Correspondingly, $S_{21}(\pm, M_2, N_2)$ decomposes as two terms involving
$\Psi_u^+\left(\frac{nN_2}{q^2}\right)$ and $\Psi_u^-\left(\frac{nN_2}{q^2}\right)$,
respectively,
plus the $O$-term. Since the term involving $\Psi_u^-\left(\frac{nN_2}{q^2}\right)$
can be treated exactly the same as that involving $\Psi_u^+\left(\frac{nN_2}{q^2}\right)$,
we only consider the latter denoted by
\bna
S_{21}^{+}(\pm, M_2,N_2)
=\frac{\phi(q)\sqrt{M_2}}{\sqrt{qN_2}}\sum_{|m|<\frac{q^{1+\varepsilon}}{M_2}}
\sum_{n\geq 1}\frac{\lambda_f(n)}{n}H(m,n)
\sideset{}{^*}\sum_{\gamma\bmod q}
e\left(\frac{\gamma (m\pm n)}{q}\right),
\ena
where
\bna
H(m,n)&=&\int_{\mathbb{R}}\frac{\omega_1(u)}{\sqrt{u}}
e\left(-\frac{m M_2 u}{q}\right)\Phi_u^+\left(\frac{nN_2}{q^2}\right)
\mathrm{d}u.
\ena
Note that
\bna
\sideset{}{^*}\sum_{\gamma\bmod q}
e\left(\frac{\gamma (m\pm n)}{q}\right)&=&
\sideset{}{^*}\sum_{\gamma_1\bmod q_1}
e\left(\frac{\gamma_1 (m\pm n)}{q_1}\right)
\sideset{}{^*}\sum_{\gamma_2\bmod q_2}
e\left(\frac{\gamma_2 (m\pm n)}{q_2}\right)\\
&=&(q_1 1_{m\equiv \mp n \bmod q_1}-1)
(q_2 1_{m\equiv \mp n \bmod q_2}-1).
\ena
Thus
\bna
S_{21}^{+}(\pm, M_2,N_2)=R_1-R_2-R_3+R_4,
\ena
where
\bna
R_1&=&\frac{\phi(q)\sqrt{qM_2}}{\sqrt{N_2}}\sum_{|m|<\frac{q^{1+\varepsilon}}{M_2}}
\sum_{n\geq 1 \atop n\equiv \mp m \bmod q}\frac{\lambda_f(n)}{n}H(m,n),\\
R_2&=&\frac{q_1\phi(q)\sqrt{M_2}}{\sqrt{qN_2}}\sum_{|m|<\frac{q^{1+\varepsilon}}{M_2}}
\sum_{n\geq 1 \atop n\equiv \mp m \bmod q_1}\frac{\lambda_f(n)}{n}H(m,n),\\
R_3&=&\frac{q_2\phi(q)\sqrt{M_2}}{\sqrt{qN_2}}\sum_{|m|<\frac{q^{1+\varepsilon}}{M_2}}
\sum_{n\geq 1 \atop n\equiv \mp m \bmod q_2}\frac{\lambda_f(n)}{n}H(m,n),\\
R_4&=&\frac{\phi(q)\sqrt{M_2}}{\sqrt{qN_2}}\sum_{|m|<\frac{q^{1+\varepsilon}}{M_2}}
\sum_{n\geq 1}\frac{\lambda_f(n)}{n}H(m,n).
\ena
For $M_2> q^{\beta_1}$ and
$|m|<\frac{q^{1+\varepsilon}}{M_2}$, the condition
$m\equiv \mp n \bmod q$ with $m\neq \mp n$ implies that $n\asymp |k|q$ with $|k|\geq 1$. Thus
by (2.2),
\bna
R_1&\ll& q\frac{\sqrt{q M_2}}{\sqrt{N_2}}
\sum_{|m|<\frac{q^{1+\varepsilon}}{M_2}}
\frac{|\lambda_f(|m|)|}{|m|} \frac{N_2 |m|}{q^2}
+q\frac{\sqrt{q M_2}}{\sqrt{N_2}}
\sum_{|m|<\frac{q^{1+\varepsilon}}{M_2}}
\sum_{1\leq |k|\ll \frac{q^{1+\varepsilon}}{N_2}}
\frac{(q^2/N_2)^{\theta}}{|k|q}\\
&\ll&q^{\varepsilon}\frac{\sqrt{q N_2}}{\sqrt{M_2}}
+q^{\frac{3}{2}+\varepsilon}\frac{(q^2/N_2)^{\theta}}{\sqrt{M_2N_2}}\\
&\ll& q^{\frac{5}{4}-\beta_1+\varepsilon}
+q^{\frac{3}{2}-\frac{\beta_1}{2}-\frac{\beta_2}{2}+\varepsilon}
\left(q^{2-\beta_2}\right)^{\theta}.
\ena
Similarly,
\bna
R_2&\ll& \frac{\sqrt{q M_2}}{\sqrt{N_2}}
\sum_{|m|<\frac{q^{1+\varepsilon}}{M_2}}
\sum_{|n|<\frac{q^{1+\varepsilon}}{M_2}}
|\lambda_f(n)|\frac{N_2}{q^2}q_1
+\frac{\sqrt{q M_2}}{\sqrt{N_2}}
\sum_{|m|<\frac{q^{1+\varepsilon}}{M_2}}
\sum_{\frac{q^{1+\varepsilon}}{M_2}\leq|n|\leq
\frac{q^{2+\varepsilon}}{N_2} \atop n\asymp |k|q_1}
\frac{|\lambda_f(n)|}{n}q_1\\
&\ll&\frac{\sqrt{q M_2}}{\sqrt{N_2}}\frac{N_2}{q^2}q_1
\left(\frac{q^{1+\varepsilon}}{M_2}\right)^2
+\frac{\sqrt{q M_2}}{\sqrt{N_2}}
\frac{q^{1+\varepsilon}}{M_2}\left(\frac{q^2}{N_2}\right)^{\theta}\\
&\ll& q_1q^{\frac{5}{4}-2\beta_1+\varepsilon}
+q^{\frac{3}{2}-\frac{\beta_1}{2}-\frac{\beta_2}{2}+\varepsilon}
\left(q^{2-\beta_2}\right)^{\theta}
\ena
and
\bna
R_3\ll q_2q^{\frac{5}{4}-2\beta_1+\varepsilon}
+q^{\frac{3}{2}-\frac{\beta_1}{2}-\frac{\beta_2}{2}+\varepsilon}
\left(q^{2-\beta_2}\right)^{\theta}.
\ena
Finally,
\bna
R_4\ll q^{\frac{3}{2}-\frac{\beta_1}{2}-\frac{\beta_2}{2}+\varepsilon}.
\ena
We conclude that
\bea
S_{21}(\pm, M_2,N_2)\ll q^{\frac{5}{4}-\beta_1+\varepsilon}
+(q_1+q_2)q^{\frac{5}{4}-2\beta_1+\varepsilon}
+q^{\frac{3}{2}-\frac{\beta_1}{2}-\frac{\beta_2}{2}+\varepsilon}
\left(q^{2-\beta_2}\right)^{\theta}+
\frac{q^{\frac{5}{4}+\frac{3\theta}{2}+\varepsilon}}{\min\{q_1,q_2\}}.\nonumber\\
\eea
Taking
\bna
\beta_1=\frac{3}{4}+\frac{3\theta}{4(1+\theta)}, \qquad \beta_2=\frac{1}{2}+\frac{3\theta}{2(1+\theta)}.
\ena
Then Lemma 5.1 follows from (5.6), (5.8) and (5.10).
\end{proof}

$S_{22}$ and $S_{23}$ can be estimated similarly. We have
\begin{lemma}
For any $\varepsilon>0$, we have
\bna
S_{22}\ll q_1q^{-\frac{1}{8}+\frac{3\theta}{8(1+\theta)}+\varepsilon}
+q_1^{\frac{3}{2}}q^{-\frac{5}{8}+\frac{3\theta}{8(1+\theta)}+\varepsilon}
+q_1^3q^{-\frac{9}{8}-\frac{9\theta}{8(1+\theta)}+\varepsilon}
+\frac{q_1}{\min\{q_1,q_2\}}
q^{\frac{1}{4}+\frac{3\theta}{2}+\varepsilon}
+q^{\frac{3}{4}+\varepsilon}
\ena
and
\bna
S_{23}\ll q_2q^{-\frac{1}{8}+\frac{3\theta}{8(1+\theta)}+\varepsilon}
+q_2^{\frac{3}{2}}q^{-\frac{5}{8}+\frac{3\theta}{8(1+\theta)}+\varepsilon}
+q_2^3q^{-\frac{9}{8}-\frac{9\theta}{8(1+\theta)}+\varepsilon}
+\frac{q_2}{\min\{q_1,q_2\}}
q^{\frac{1}{4}+\frac{3\theta}{2}+\varepsilon}
+q^{\frac{3}{4}+\varepsilon}.
\ena
\end{lemma}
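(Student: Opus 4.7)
The plan is to treat $S_{22}$ by mirroring the three-case analysis in the proof of Lemma 5.1; the bound for $S_{23}$ then follows by interchanging the roles of $q_1$ and $q_2$. The essential simplification is that the additive character in $S_{22}$ involves only the modulus $q_1$, so the $q_2$-contribution to the relevant complete exponential sums becomes a Ramanujan sum rather than a Kloosterman sum.

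After a smooth partition of unity in $m$ and $n$, we reduce to bounding
\bna
S_{22}(\pm,M_2,N_2)=\frac{\phi(q_1)}{\sqrt{q}}\sum_{(m,q)=1}\frac{\omega_1(m/M_2)}{\sqrt{m}}\sum_{(n,q)=1}\frac{\lambda_f(n)\omega_2(n/N_2)}{\sqrt{n}}V\!\left(\frac{\pi^{3/2}mn}{q^{3/2}}\right)e\!\left(\pm\frac{\overline{m}_1\overline{q_2}\,n}{q_1}\right),
\ena
with $M_2N_2\ll q^{3/2+\varepsilon}$. Remove the coprimality $(n,q)=1$ by estimating the contribution from $q_1\mid n$ and $q_2\mid n$ trivially with $|\lambda_f(n)|\ll n^{\theta+\varepsilon}$; because the outer coefficient $\phi(q_1)/\sqrt{q}$ is smaller than that in $S_{21}$ by a factor of $1/q_2$, this costs at most $\tfrac{q_1}{\min\{q_1,q_2\}}q^{1/4+3\theta/2+\varepsilon}+q^{3/4+\varepsilon}$.

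Next, consider three cases using the same parameters $\beta_1=\tfrac{3}{4}+\tfrac{3\theta}{4(1+\theta)}$ and $\beta_2=\tfrac{1}{2}+\tfrac{3\theta}{2(1+\theta)}$ as for $S_{21}$. In Case~I ($M_2\leq q^{\beta_1}$), partial summation combined with Lemma~2.3 applied to the $n$-sum (valid because $\pm\overline{m}_1\overline{q_2}/q_1$ is a real number depending only on $m$) saves $\sqrt{N_2}$ against the trivial bound, yielding $q_1\,q^{-1/2+\beta_1/2+\varepsilon}=q_1\,q^{-1/8+3\theta/(8(1+\theta))+\varepsilon}$. In Case~II ($M_2>q^{\beta_1}$, $N_2\leq q^{\beta_2}$), apply Poisson summation modulo $q$ to the $m$-sum (using CRT to accommodate $(m,q)=1$); the resulting character sum factors as a Kloosterman sum modulo $q_1$ of size $\sqrt{q_1}$ times a Ramanujan sum modulo $q_2$ (generically $O(1)$, with a $q_2$-saving from the shortened $\ell$-range when $q_2\mid\ell$). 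Estimating the dual $\ell$-sum and the $n$-sum trivially yields $q_1^{3/2}q^{-1/2-\beta_1/2+\beta_2/2+\varepsilon}=q_1^{3/2}q^{-5/8+3\theta/(8(1+\theta))+\varepsilon}$. In Case~III ($M_2>q^{\beta_1}$, $N_2>q^{\beta_2}$), apply the Voronoi formula (Lemma~2.4) modulo $q_1$ to the $n$-sum after Poisson on $m$; the combined complete character sum now factors as a product of two Ramanujan sums, one modulo $q_1$ of shape $q_1\mathbf{1}_{\ell\equiv\pm q_2^2 n\bmod q_1}-1$ and one modulo $q_2$ of shape $q_2\mathbf{1}_{q_2\mid\ell}-1$. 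Decomposing into the four resulting pieces $R_1,\ldots,R_4$ and estimating each using (2.2) together with the Ramanujan bound at exponent $\theta$, as in the proof of Lemma~5.1, one obtains the remaining term $q_1^3\,q^{-9/8-9\theta/(8(1+\theta))+\varepsilon}$ from the worst off-diagonal contribution, with lower-order pieces absorbed into the stated error.

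The main obstacle is the bookkeeping in Case~III: the replacement of a Kloosterman sum by a Ramanujan sum on the $q_2$-side forces one to track which residue classes modulo $q_1$ and modulo $q_2$ survive, and to match them against the restricted support of the dual variable $\ell$; this is what produces the large $q_1^3$ factor, which is compensated by the strong $q$-decay. The estimate for $S_{23}$ is obtained by the same argument with the roles of $q_1$ and $q_2$ interchanged.
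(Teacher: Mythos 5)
Your proposal follows exactly the route the paper intends: the paper gives no separate argument for Lemma 5.2, asserting only that $S_{22}$ and $S_{23}$ ``can be estimated similarly'' to $S_{21}$, and your three-case adaptation --- with the $q_2$-side Kloosterman sum degenerating to a Ramanujan sum, the prefactor $\phi(q)$ replaced by $\phi(q_1)$, and Voronoi applied modulo $q_1$ --- reproduces each stated exponent (Cases I and II give the first two terms exactly, the coprimality removal gives the $\frac{q_1}{\min\{q_1,q_2\}}q^{\frac14+\frac{3\theta}{2}+\varepsilon}$ term, and the worst off-diagonal piece of Case III, which only occurs when $q_1\gg q^{\beta_1}$, is bounded by $q_1^3q^{-\frac{3\beta_1}{2}+\varepsilon}=q_1^3q^{-\frac98-\frac{9\theta}{8(1+\theta)}+\varepsilon}$). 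This is the same approach as the paper's (omitted) proof.
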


By (5.3), (5.4) and Lemmas 5.1 and 5.2, (3.3) follows.

\medskip

\medskip
\noindent
{\sc Acknowledgements.}
The author is supported by
the National Natural Science Foundation of China (Grant No. 11101239) and
Young Scholars Program of Shandong University, Weihai (Grant No. 2015WHWLJH04).

\end{document}